\documentclass[reqno]{amsart}
\usepackage{amsmath,amssymb,amsthm}
\usepackage[hidelinks]{hyperref}
\hypersetup{pdfauthor={Hy P. G. Lam},pdftitle={Zero-energy scattering and the real Bers image on the line}}

\newtheorem{theorem}{Theorem}[section]
\newtheorem{proposition}[theorem]{Proposition}
\newtheorem{lemma}[theorem]{Lemma}
\newtheorem{corollary}[theorem]{Corollary}
\theoremstyle{definition}
\newtheorem{definition}[theorem]{Definition}
\newtheorem{remark}[theorem]{Remark}
\newtheorem{thmintro}{Theorem}
\newtheorem{corintro}[thmintro]{Corollary}

\newcommand{\R}{\mathbb{R}}
\newcommand{\C}{\mathbb{C}}
\newcommand{\Sw}{\mathcal{S}}
\newcommand{\SwR}{\mathcal{S}(\R)}
\newcommand{\SwRreal}{\mathcal{S}(\R;\R)}
\newcommand{\SwRzero}{\mathcal{S}_0(\R;\R)}
\newcommand{\Dg}{\operatorname{Diff}_{\mathcal S}(\R)}
\newcommand{\dif}{\,\mathrm{d}}
\newcommand{\im}{\operatorname{Im}}
\newcommand{\Wr}{W}
\newcommand{\sla}{\mathcal R_{\!B}}
\newcommand{\Bers}{\beta}
\newcommand{\RRset}{\mathfrak R_0}

\begin{document}

\title[Zero-energy scattering and the real Bers image]{Zero-energy scattering and the real Bers image on the line}

\author{Hy P. G. Lam}
\address{Department of Mathematical Sciences, Worcester Polytechnic Institute, Worcester, MA 01609, USA}
\email{hylam.math@gmail.com, hlam@wpi.edu}

\subjclass[2020]{Primary 34L25; Secondary 34L40, 53A55, 58D05}
\keywords{Schwarzian derivative, Miura map, inverse scattering, zero-energy resonance, diffeomorphism groups, Schwartz diffeomorphisms}

\begin{abstract}
Let $\Dg$ be the group of orientation-preserving diffeomorphisms $\varphi$ of the line with $\varphi'-1$ Schwartz and $\varphi(x)-x\to0$ as $x\to-\infty$, and let $\Bers(\varphi)=\tfrac12S(\varphi)$ be half the Schwarzian derivative. We determine the image of $\Bers$. The coordinate $w_\varphi=\tfrac12(\log\varphi')'$ identifies $\Dg$ with the zero-mean hyperplane $\SwRzero$ and turns $\Bers$ into $w\mapsto w'-w^2$, so the question is which real Schwartz $q$ equal $w'-w^2$ for some $w$ of zero mean. Nonnegativity of the Schr\"odinger operator $H_q=-\partial_x^2-q$ decides which $q$ admit such a $w$ at all, and says nothing about the mean of $w$. The mean is a scattering invariant. Let $T_q$ and $R_q$ be the transmission and reflection coefficients of $H_q$. For every real $w\in\SwR$, with $q=w'-w^2$, we prove
\[
T_q(0)=\operatorname{sech}\!\left(\int_\R w\dif x\right),
\qquad
R_q(0)=-\tanh\!\left(\int_\R w\dif x\right),
\]
so $\int_\R w\dif x$ is read off the scattering matrix at zero energy. Thus $q\in\Bers(\Dg)$ if and only if $H_q$ has no negative eigenvalues and $R_q(0)=0$, equivalently $T_q(0)=1$, and $\varphi\mapsto R_{\Bers(\varphi)}$ is a bijection onto an explicit set of Schwartz reflection coefficients. We also compute the differential of $\Bers$, whose range depends on the ambient topology. On the Schwartz space, that range is closed and split of codimension 2, with normal functionals the first variations of the Wronskian of the two zero-energy Jost solutions and of $R_q(0)$. In every $W^{k,1}$ realization, the second functional is unbounded. The range is then a dense proper subspace of the kernel of the first, hence neither closed nor split, and the operator has no bounded inverse on its range.
\end{abstract}

\maketitle

\section{Introduction}

Let
\[
\Dg=\left\{\varphi\in\operatorname{Diff}^{+}(\R)\ \middle|\ \varphi'-1\in\SwRreal,
\quad \varphi(x)-x\to0\text{ as }x\to-\infty\right\}
\]
and define the real Bers map by
\[
\Bers(\varphi)=\frac12S(\varphi),
\qquad
S(\varphi)=\frac{\varphi'''}{\varphi'}-\frac32\left(\frac{\varphi''}{\varphi'}\right)^2.
\]
Lu introduced this map on a larger decay-controlled group and asked for an exact description of its image \cite[Remark 3.4.4]{Lu25}. We solve the Schwartz-class form of that problem and compute the range of the differential.

Write $u=\log\varphi'$ and $w=u'/2$. Then
\begin{equation}\label{eq:potentialform}
\Bers(\varphi)=w'-w^2.
\end{equation}
More generally, if $q,w\in\SwRreal$ satisfy
\[
q=w'-w^2,
\]
we call $w$ a \emph{Schwartz Riccati representative} of $q$. The map \[
\Theta\colon\Dg\longrightarrow\SwRzero,
\qquad
\Theta(\varphi)=w_\varphi,
\] identifies $\Dg$ with the closed hyperplane
\[
\SwRzero=\left\{w\in\SwRreal\ \middle|\ \int_\R w\dif x=0\right\}.
\]
Thus the image problem is to determine which real Schwartz potentials admit a Schwartz Riccati representative of zero mean.

For a real Schwartz potential $q$, let $f_\pm(\cdot,k)$ be the Jost solutions of $-y''-qy=k^{2}y$ fixed by $f_\pm(x,k)=e^{\pm ikx}(1+o(1))$ as $x\to\pm\infty$, write $\Wr(f,g)=fg'-f'g$, let $a_q$ and $b_q$ be the scattering coefficients in the Jost relation of Section \ref{sec:jost}, and set
\[
T_q(k)=\frac1{a_q(k)},
\qquad
R_q(k)=\frac{b_q(k)}{a_q(k)}.
\]
Here $R_q$ is the right reflection coefficient of $H_q=-\partial_x^2-q$. Our first result recovers $\int_\R w\dif x$ from the scattering data at zero energy.

\begin{thmintro}[Theorem \ref{thm:threshold}]\label{thmA}
Let $w\in\SwRreal$, set $q=w'-w^2$, and write $I(w)=\int_\R w\dif x$. Then $H_q\geq0$, the two zero-energy Jost solutions are linearly dependent, and
\[
a_q(0)=\cosh I(w),
\qquad
b_q(0)=-\sinh I(w),
\]
\[
T_q(0)=\operatorname{sech} I(w),
\qquad
R_q(0)=-\tanh I(w).
\]
\end{thmintro}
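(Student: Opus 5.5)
The plan is to use the factorization $H_q=A^*A$ with $A=\partial_x+w$ and $A^*=-\partial_x+w$. A direct computation gives
\[
(-\partial_x+w)(\partial_x+w)y=-y''-(w'-w^2)y=-y''-qy,
\]
so $\langle H_q y,y\rangle=\|Ay\|^2\geq0$, which settles $H_q\geq0$. The kernel of $A$ produces an explicit zero-energy solution,
\[
y_0(x)=\exp\Bigl(-\int_{-\infty}^x w\dif t\Bigr),
\]
which satisfies $Ay_0=0$ and hence $H_qy_0=0$. Because $w\in\SwRreal$, $y_0(x)=1+O(|x|^{-N})$ as $x\to-\infty$ and $y_0(x)=e^{-I(w)}+O(|x|^{-N})$ as $x\to+\infty$, with all derivatives of $y_0$ decaying rapidly.

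By uniqueness of Jost solutions at $k=0$ (the solutions tending to $1$ at $\pm\infty$ with rapidly decaying derivative), I identify $f_-(x,0)=y_0$ and $f_+(x,0)=e^{I}y_0$, writing $I=I(w)$. Thus $f_-(\cdot,0)=\gamma f_+(\cdot,0)$ with $\gamma=e^{-I}$, which gives the claimed linear dependence.

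Next I compute $a_q(0)$ and $b_q(0)$ as limits $k\to0$, since the Wronskian formulas degenerate there. In the convention of Section \ref{sec:jost} I expect $a_q(k)=\Wr(f_-(\cdot,k),f_+(\cdot,k))/(2ik)$, and $b_q(k)$ to be a similar Wronskian of $f_-(\cdot,k)$ against $f_+(\cdot,-k)$ divided by $2ik$. Set $W(k)=\Wr(f_-(\cdot,k),f_+(\cdot,k))$. Then $W(0)=0$, and by the Schwartz-class theory of Jost solutions (Deift--Trubowitz: $m_\pm(x,k)=e^{\mp ikx}f_\pm(x,k)$ are smooth in $k$ up to $k=0$, with controlled $x$-asymptotics of $\partial_k m_\pm$), we have $a_q(0)=\dot W(0)/(2i)$, where the dot denotes $\partial_k$. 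Expanding,
\[
\dot W(0)=\Wr(\dot f_-,f_+)+\Wr(f_-,\dot f_+)\big|_{k=0}.
\]
Each term is a Wronskian of two solutions of the same $k=0$ equation up to an inhomogeneity that vanishes at $k=0$, so each is $x$-independent and can be evaluated at a convenient end.
\begin{itemize}
\item For $\Wr(f_-,\dot f_+)$ I evaluate at $x\to+\infty$. There $f_-=\gamma f_+\to\gamma$, and $\dot f_+(x,0)=ix\,m_+(x,0)+\dot m_+(x,0)\sim ix$ with derivative $\to i$. This gives $\Wr(f_-,\dot f_+)=i\gamma$.
\item For $\Wr(\dot f_-,f_+)$ I evaluate at $x\to-\infty$. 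There $f_+\to1/\gamma$ and $\dot f_-\sim -ix$. This gives $\Wr(\dot f_-,f_+)=i/\gamma$.
\end{itemize}
Hence $a_q(0)=\tfrac12(\gamma+\gamma^{-1})=\cosh I$. The same computation for $b_q$ changes the sign in front of one of the two terms, giving $b_q(0)=\pm\tfrac12(\gamma^{-1}-\gamma)$. Fixing the sign against the paper's Jost relation should give $-\sinh I$. The formulas $T_q(0)=\operatorname{sech}I$ and $R_q(0)=-\tanh I$ then follow by division, using $\cosh I\neq0$.

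The main obstacle is the analytic justification of the $k$-derivative step. I must show that $\partial_k m_\pm(x,0)$ exists, that it is $o(1)$ at the appropriate infinity with derivative $o(|x|^{-1})$, and that the constant-Wronskian evaluation at $\pm\infty$ is legitimate. For this I would differentiate the Volterra integral equation for $m_\pm$ in $k$, which is harmless because $q$ has all moments finite. I would also verify carefully the precise form of the Jost relation defining $b_q$, so that the sign $b_q(0)=-\sinh I$ comes out correctly rather than $+\sinh I$.
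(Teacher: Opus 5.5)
Your proposal is correct. The first half matches the paper: the factorization $H_q=(-\partial_x+w)(\partial_x+w)$, the explicit kernel of $\partial_x+w$, and the identification $f_-(\cdot,0)=e^{-I}f_+(\cdot,0)$, justified because any independent zero-energy solution grows linearly.

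You diverge from the paper in how you extract $a_q(0)$ and $b_q(0)$. The paper never differentiates in $k$ here. It applies Lemma~\ref{lem:kexpansion}, which needs only continuity of $a,b$ at $k=0$ in the exceptional case. Letting $k\to0$ in \eqref{eq:scatterrel} gives $a_q(0)+b_q(0)=c$. Reality and $|a|^2-|b|^2=1$ then give $a_q(0)-b_q(0)=c^{-1}$, and the four formulas follow by substituting $c=e^{-I}$.

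You instead differentiate the Wronskian numerator at $k=0$ and evaluate the two $x$-independent Wronskians at opposite ends. Your values $\Wr(f_-,\dot f_+)=i\gamma$ and $\Wr(\dot f_-,f_+)=i\gamma^{-1}$ are right, so $a_q(0)=\frac12(\gamma+\gamma^{-1})$. The same computation works for any exceptional potential with $c$ in place of $\gamma$, so it is an independent proof of Lemma~\ref{lem:kexpansion}. It shows directly that the two ends of the line contribute $\gamma$ and $\gamma^{-1}$, and it does not use the unitarity identity. The cost is first-order differentiability of $m_\pm$ in $k$ plus the asymptotics of $\partial_k m_\pm(x,0)$ and its $x$-derivative. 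These do follow from the differentiated Volterra equation, as you indicate. The paper's route is lighter and yields a reusable formula in $c$ that it uses again in Theorems~\ref{thm:image} and~\ref{thm:differential}.

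The sign of $b_q(0)$, which you left open, is determined by your own method. With $b(k)=\Wr(f_+(\cdot,-k),f_-(\cdot,k))/(2ik)$, the $k$-derivative of $f_+(\cdot,-k)$ contributes $-\dot f_+$. The numerator therefore has derivative $\Wr(f_-,\dot f_+)-\Wr(\dot f_-,f_+)=i(\gamma-\gamma^{-1})$ at $k=0$, which gives $b_q(0)=\frac12(\gamma-\gamma^{-1})=-\sinh I$. Alternatively, once $a_q(0)$ is known, the limit $a_q(0)+b_q(0)=\gamma$ of the Jost relation fixes the sign immediately.
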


The factorization
\[
H_q=(-\partial_x+w)(\partial_x+w)
\]
shows only that $H_q$ is nonnegative. Theorem \ref{thmA} identifies the scalar information not seen by positivity with a threshold scattering invariant, and yields the following answer to the image problem.

\begin{thmintro}[Theorem \ref{thm:image}]\label{thmB}
Let $q\in\SwRreal$. The following are equivalent.
\begin{enumerate}
\item $q=\Bers(\varphi)$ for a unique $\varphi\in\Dg$.
\item $q=w'-w^2$ for a unique $w\in\SwRzero$.
\item $H_q$ has no negative eigenvalues and $R_q(0)=0$.
\item $H_q$ has no negative eigenvalues and $T_q(0)=1$.
\end{enumerate}
When these conditions hold, the zero-energy right Jost solution satisfies $f_+(\cdot,0)>0$ and $f_+(x,0)\to1$ as $x\to\pm\infty$, and
\[
\varphi'(x)=f_+(x,0)^{-2}.
\]
\end{thmintro}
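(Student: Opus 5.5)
The plan is to prove the chain (1)$\Leftrightarrow$(2)$\Rightarrow$(3)$\Leftrightarrow$(4)$\Rightarrow$(2). Theorem \ref{thmA} supplies the scattering input; the only new construction is the Riccati representative built from the zero-energy Jost solution.

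\textbf{(1)$\Leftrightarrow$(2).} Here I use the identification $\Theta\colon\Dg\to\SwRzero$ and the identity \eqref{eq:potentialform}. Given $w\in\SwRzero$, set
\[
\varphi(x)=\int_{-\infty}^x\Bigl(e^{2\int_{-\infty}^t w}-1\Bigr)\dif t+x .
\]
Because $\int w=0$, the factor $e^{2\int_{-\infty}^x w}-1$ is Schwartz. Hence $\varphi'-1\in\SwRreal$, and $\varphi(x)-x\to0$ as $x\to-\infty$. The point to check is that the map $\varphi\mapsto w_\varphi$ is injective. This holds because $\log\varphi'$ is recovered from $w$ by integration from $-\infty$, where it vanishes, and $\varphi$ is then recovered from $\varphi'$ by the normalization at $-\infty$. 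So uniqueness transfers in both directions.

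\textbf{(2)$\Rightarrow$(3),(4).} By Theorem \ref{thmA}, $H_q\ge0$, and nonnegativity rules out negative eigenvalues. Also $R_q(0)=-\tanh 0=0$ and $T_q(0)=\operatorname{sech}0=1$.

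\textbf{(3)$\Leftrightarrow$(4).} Under either hypothesis I first produce some real Schwartz $w$ with $q=w'-w^2$; this is the Step 2 below. Once $w$ exists, Theorem \ref{thmA} gives $R_q(0)=-\tanh I(w)$ and $T_q(0)=\operatorname{sech}I(w)$. Each of $R_q(0)=0$ and $T_q(0)=1$ is equivalent to $I(w)=0$, since $\tanh$ vanishes only at $0$ and $\operatorname{sech}$ equals $1$ only at $0$.

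\textbf{(3)$\Rightarrow$(2), Step 1: existence of a representative.} Assume $H_q$ has no negative eigenvalues. I take $w=-f_+'(\cdot,0)/f_+(\cdot,0)$, which needs $f_+(\cdot,0)>0$. By the factorization, $q=w'-w^2$ is exactly the Riccati form of $-y''-qy=0$ under $y=e^{-\int w}$. To get positivity I argue by Sturm oscillation. Since $f_+(x,0)\to1$ as $x\to+\infty$, a zero of $f_+(\cdot,0)$ would allow a compactly supported test function with negative quadratic form, after a standard cutoff argument using the $x$-growth. That contradicts $H_q\ge0$ on the line. Zero-free solutions then lead to nonnegativity of the quadratic form in the usual way.

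\textbf{(3)$\Rightarrow$(2), Step 2: Schwartz decay and the main obstacle.} I need $w$ Schwartz. At $+\infty$ this follows from $f_+(x,0)-1$ being Schwartz, as the Jost integral equation gives for Schwartz $q$. At $-\infty$ I need $f_+(\cdot,0)$ to tend to a nonzero constant rather than grow linearly. This is the main obstacle, and it is where $R_q(0)=0$ enters. Generically $f_+(x,0)\sim -a\,x$ as $x\to-\infty$, which gives $w\sim 1/x$, not Schwartz. I would use the zero-energy Jost relation of Section \ref{sec:jost} to write $f_+(\cdot,0)$ as a combination of $f_-(\cdot,0)$ and the growing solution. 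Nonnegativity forces the Wronskian $\Wr(f_-(\cdot,0),f_+(\cdot,0))$ to vanish, since otherwise $f_+(\cdot,0)$ would have a zero. So $f_+(\cdot,0)=c f_-(\cdot,0)$ with $c=a_q(0)-b_q(0)$, up to the paper's conventions. Then $f_+(x,0)\to c$ as $x\to-\infty$, and $c>0$ by positivity. Hence $w$ is Schwartz, and Theorem \ref{thmA} applies.

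\textbf{(3)$\Rightarrow$(2), Step 3: zero mean and the formula for $\varphi'$.} With $w$ Schwartz, Theorem \ref{thmA} and $R_q(0)=0$ give $I(w)=0$, so $w\in\SwRzero$. Integrating $w=-f_+'/f_+$ gives
\[
I(w)=\log f_+(-\infty,0)-\log f_+(+\infty,0)=\log c ,
\]
so $c=1$ and $f_+(x,0)\to1$ at both ends. Uniqueness of the zero-mean $w$ comes from the Riccati equation. If $w_1,w_2$ both represent $q$, then $e^{-\int w_i}$ are positive zero-energy solutions that are bounded at $+\infty$, hence proportional, so $w_1=w_2$. Finally, $\varphi'=e^{2\int_{-\infty}^x w}=f_+(-\infty,0)^2/f_+(x,0)^2=f_+(x,0)^{-2}$.
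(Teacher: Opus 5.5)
\textbf{There is a genuine gap in your Step 2.} It rests on a false claim: ``Nonnegativity forces the Wronskian $\Wr(f_-(\cdot,0),f_+(\cdot,0))$ to vanish, since otherwise $f_+(\cdot,0)$ would have a zero.''

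Take any nonzero $q\in\SwRreal$ with $q\le0$, e.g.\ $q=-e^{-x^2}$. Then $H_q=-\partial_x^2+|q|\ge0$. The function $y=f_+(\cdot,0)$ satisfies $y'(x)=\int_x^\infty q\,y\dif t$. A continuity argument gives $y\ge1$ on $\R$, and then $y'(x)\to\int_\R q\,y\dif t<0$ as $x\to-\infty$. So $f_+(\cdot,0)$ is positive and grows linearly at $-\infty$. The potential is therefore generic, with $R_q(0)=-1$.

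If your sentence were true, Steps 1 and 2 would give a Schwartz Riccati representative for every $q$ with $H_q\ge0$. Theorem \ref{thmA} would then make every such $q$ exceptional with $|R_q(0)|<1$, and this example refutes that. As written, you use $R_q(0)=0$ only in Step 3 to force $I(w)=0$, never to produce $w$. Your (3)$\Leftrightarrow$(4) paragraph has the same defect, because it presupposes Step 2.

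\textbf{The missing ingredient} is the threshold dichotomy of Lemma \ref{lem:regularity}: a generic potential has $T_q(0)=0$ and $R_q(0)=-1$. Hence either (3) or (4) excludes the generic case and yields $f_+(\cdot,0)=c\,f_-(\cdot,0)$. This is exactly how the paper's proof begins.

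\textbf{With that repair, the rest works} and is a genuine variant of the paper's argument.
\begin{itemize}
\item You obtain $f_+(\cdot,0)>0$ first, from $H_q\ge0$ alone. The cutoff argument in Step 1 is sound, but you should write it out. Since $f_+(\cdot,0)$ is bounded at $+\infty$, truncating at scale $R$ costs $O(1/R)$ in the quadratic form, while smoothing the kink at the zero gains a fixed negative amount.
\item Positivity makes $c>0$ automatic, and $c=e^{I(w)}=1$ then follows from Theorem \ref{thmA}.
\item The paper instead gets $c>0$ analytically from Lemma \ref{lem:positive-c} and reads off $c=1$ from Lemma \ref{lem:kexpansion}. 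Only then does it prove positivity, using $y$ restricted to an interval between two consecutive zeros. That test function is compactly supported because $y\to1$ at both ends, so no cutoff at infinity is needed.
\end{itemize}
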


The full Miura range is characterized by nonnegativity of the Schr\"odinger form \cite{KPST05}. Frayer, Hryniv, Mykytyuk, and Perry proved the exceptional inverse-scattering bijection for unique integrable Riccati representatives \cite[Theorem 1.1 and Corollary 1.2]{FHMP09}; weighted Sobolev mapping properties were developed in \cite{HMP10}, building on Zhou's $L^2$-Sobolev bijectivity theorem \cite{Zhou98}. Theorem \ref{thmA} cuts out the normalized real Bers image inside this known Miura class by the equation $R_q(0)=0$.

Define
\[
\RRset=\left\{R\in\SwR\ \middle|\
\overline{R(-k)}=R(k),\quad \lVert R\rVert_\infty<1,\quad R(0)=0\right\}.
\]

\begin{corintro}[Corollary \ref{cor:reflection}]\label{thmC}
The map
\[
\varphi\longmapsto R_{\Bers(\varphi)}
\]
is a bijection from $\Dg$ onto $\RRset$.
\end{corintro}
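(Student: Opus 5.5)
We obtain Corollary \ref{thmC} by composing Theorem \ref{thmB} with the exceptional inverse-scattering bijection of Frayer, Hryniv, Mykytyuk and Perry \cite[Theorem 1.1 and Corollary 1.2]{FHMP09}, restricted to the Schwartz class. Concretely, the map factors as
\[
\varphi\longmapsto q=\Bers(\varphi)\longmapsto R_q .
\]
By Theorem \ref{thmB}, the first arrow is a bijection from $\Dg$ onto the set $\mathcal Q_0$ of real Schwartz $q$ with $H_q$ having no negative eigenvalues and $R_q(0)=0$. Injectivity of the first arrow is part of the uniqueness clause in item (1) of Theorem \ref{thmB}. So it suffices to show that $q\mapsto R_q$ is a bijection from $\mathcal Q_0$ onto $\RRset$.

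\emph{The map lands in $\RRset$.} For $q\in\mathcal Q_0$, the properties $R_q\in\SwR$ and $\overline{R_q(-k)}=R_q(k)$ are standard for real Schwartz potentials. The first is smoothness and decay of $a_q,b_q$ away from $k=0$; at $k=0$ it uses Theorem \ref{thmA}, which gives a finite $a_q(0)=\cosh I(w)\neq0$, so there is no pole at threshold. The second follows from $\overline{f_\pm(x,k)}=f_\pm(x,-k)$. Unitarity of the scattering matrix gives $|R_q|^2+|T_q|^2=1$, so $|R_q(k)|<1$ wherever $T_q(k)\neq0$. For $k\neq0$ this holds because $a_q(k)$ is finite. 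At $k=0$ we have $T_q(0)=1$. Since $R_q$ is continuous and tends to $0$ at infinity, we get $\lVert R_q\rVert_\infty<1$. Finally, $R_q(0)=0$ by hypothesis.

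\emph{Injectivity.} Since $H_q\ge0$ has no bound states, the scattering data reduce to $R_q$ alone. In the exceptional (zero-energy resonance) case the Gel'fand--Levitan--Marchenko reconstruction from $R_q$ is unique; this is exactly the content of \cite{FHMP09} for potentials with an integrable Riccati representative. Here $q=w'-w^2$ with $w\in\SwRzero$, so every $q\in\mathcal Q_0$ lies in that class.

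\emph{Surjectivity.} This is the main obstacle. Given $R\in\RRset$, we first invoke \cite[Theorem 1.1]{FHMP09}: under the constraint $\lVert R\rVert_\infty<1$ together with the symmetry condition and enough regularity, it produces a unique $w$ in the weighted $L^2$ class with $q=w'-w^2$ and $R_q=R$. We then must upgrade $w$ to the Schwartz class. For this we would use the weighted Sobolev mapping properties of \cite{HMP10}, built on \cite{Zhou98}, applied at every order of smoothness and weight: Schwartz $R$ yields $w$ in every $H^{m}$ space with weight $\langle x\rangle^{n}$, and the intersection of these is $\SwR$. The delicate point is that these results are stated for the reflection coefficient as a function on the line. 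At $k=0$ the strict bound $|R(0)|<1$ holds (indeed $R(0)=0$), so the map from $R$ to $w$ has no threshold singularity and the estimates apply uniformly. Once $w\in\SwRreal$ is in hand, Theorem \ref{thmA} gives $-\tanh I(w)=R_q(0)=R(0)=0$, hence $I(w)=0$ and $w\in\SwRzero$. Since $q=w'-w^2$ is then a Schwartz Riccati representative, $H_q\ge0$, so $H_q$ has no negative eigenvalues, and $R_q(0)=0$; thus $q\in\mathcal Q_0$. Theorem \ref{thmB} then supplies the unique $\varphi\in\Dg$ with $\Bers(\varphi)=q$, and hence $R_{\Bers(\varphi)}=R$.

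If the Schwartz upgrade cannot be quoted directly from \cite{HMP10}, the fallback is to bootstrap. Using the Marchenko kernel built from the Schwartz function $R$, we would show that $f_+(\cdot,0)$ is smooth with Schwartz-class convergence to its limits. Then $w=-f_+'(\cdot,0)/f_+(\cdot,0)$, consistent with $\varphi'=f_+(\cdot,0)^{-2}$ in Theorem \ref{thmB}, inherits Schwartz decay.
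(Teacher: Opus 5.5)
Your proposal is correct and follows essentially the same route as the paper: you use direct scattering plus Theorem \ref{thmA} to land in $\RRset$, uniqueness of the exceptional inverse problem for injectivity, and for surjectivity \cite{FHMP09}, then Zhou's weighted Sobolev bijections, then Theorem \ref{thmA} to force $\int_\R w\dif x=0$. The paper only makes explicit two points your sketch leaves implicit: the ZS-AKNS rescaling $s=2k$ with the sign change $w=-v$, $q=-V$; and the identification of each Zhou solution $v_{j,k}\in H^{j,k}(\R)$ with the FHMP representative, which uses $H^{j,k}(\R)\subset L^1(\R)\cap L^2(\R)$ for $k\geq1$ together with FHMP injectivity.
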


The local image has two independent normal directions. Fix $q=\Bers(\varphi)$, put $u=\log\varphi'$ and $\psi=e^{-u/2}$, let
\[
\widetilde\psi(x)=\psi(x)\int_{x_0}^{x}\psi(t)^{-2}\dif t,
\qquad \Wr(\psi,\widetilde\psi)=1,
\]
be the second zero-energy solution, and define
\[
\Lambda_1(v)=\int_\R\psi^{2}v\dif x,
\qquad
\Lambda_2(v)=\int_\R\psi\widetilde\psi\,v\dif x .
\]

\begin{thmintro}[Theorem \ref{thm:differential}]\label{thmD}
The range of $D\Bers_u\colon\SwRreal\to\SwRreal$ is $\ker\Lambda_1\cap\ker\Lambda_2$, and that range is closed and split of codimension two. The functional $\Lambda_1$ is the first variation of $\Wr\bigl(f_-(\cdot,0),f_+(\cdot,0)\bigr)$, and on $\ker\Lambda_1$ the functional $\Lambda_2$ is the first variation of $R_q(0)$. For every $k\geq0$, the range of
\[
D\Bers_u\colon W^{k+2,1}(\R)\longrightarrow W^{k,1}(\R)
\]
is not closed, and the closure of that range is $\ker\Lambda_1$.
\end{thmintro}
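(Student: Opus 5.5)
The plan is to reduce everything to an explicit first-order factorization of $D\Bers_u$ and read off the range by solving the linearized equation by quadrature. In the coordinate $u=\log\varphi'$ we have $w=u'/2$, so $\Bers=\tfrac12u''-\tfrac14(u')^2$ by \eqref{eq:potentialform}. The differential in a direction $h$ is therefore
\[
D\Bers_u h=\tfrac12\bigl(h''-u'h'\bigr)=\tfrac12\,e^{u}\bigl(e^{-u}h'\bigr)'.
\]
Since $\psi=e^{-u/2}$, we have $\psi^2=e^{-u}$ and $\psi^{-2}=e^{u}$.

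\textbf{Step 1: the Schwartz range.} Given $v\in\SwRreal$, solving $D\Bers_u h=v$ forces
\[
e^{-u}h'(x)=2\int_{-\infty}^x\psi^2v\dif t+c.
\]
For $h'$ to decay at both ends we need $c=0$ and $\Lambda_1(v)=0$. Then $h'$ is Schwartz, because $u$ is bounded with Schwartz derivatives. Next, $h=\int_{-\infty}^xh'$ is Schwartz exactly when $\int_\R h'=0$. By Fubini,
\[
\int_\R h'\dif x=2\int_\R\psi(t)^2v(t)\int_t^\infty\psi^{-2}\dif x\dif t .
\]
Write $\int_t^\infty=\int_{x_0}^\infty-\int_{x_0}^t$; this split is valid because $\psi^{-2}-1$ is Schwartz, so one compensates with $\Lambda_1(v)=0$ before splitting. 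This shows the condition equals $-2\Lambda_2(v)=0$ on $\ker\Lambda_1$. Hence the range is $\ker\Lambda_1\cap\ker\Lambda_2$, which is closed as the kernel of two continuous functionals. Any finite-codimensional closed subspace is split. Codimension two follows once $\Lambda_1,\Lambda_2$ are shown independent: take $v$ supported where $\widetilde\psi/\psi$ takes two different values, or note that $\Lambda_2$ changes when $x_0$ moves while $\Lambda_1$ does not.

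\textbf{Step 2: scattering interpretation.} At $q=\Bers(\varphi)$, Theorem \ref{thmB} gives $f_\pm(\cdot,0)=\psi$. The standard variation formula $\delta\Wr(f_-,f_+)=\pm\int f_-f_+\,\delta q$ then yields $\Lambda_1$. For $R_q(0)$, I would use the Volterra representation
\[
\delta f_+(x,0)=\int_x^\infty\bigl(\psi(x)\widetilde\psi(t)-\widetilde\psi(x)\psi(t)\bigr)\psi(t)v(t)\dif t .
\]
As $x\to-\infty$, the coefficient of the growing solution $\widetilde\psi$ is $\Lambda_1(v)$, which vanishes on $\ker\Lambda_1$. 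The coefficient of $\psi$ is $\Lambda_2(v)$ up to a normalization. Matching with the Jost relation at $k\to0$ identifies $\delta b_q(0)$, hence $\delta R_q(0)$ since $a_q(0)=1$, with $\Lambda_2$ up to a constant. Alternatively, differentiate $R_q(0)=-\tanh I(w)$ from Theorem \ref{thmA} along Riccati representatives. This is consistent with Step 1, since $\int h'$ is exactly the variation of the mean.

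\textbf{Step 3: $W^{k,1}$ and the main obstacle.} The function $\Lambda_1$ is bounded on $L^1$ since $\psi^2$ is bounded. In contrast, $\psi\widetilde\psi\sim x$ at $\pm\infty$, so $\Lambda_2$ is unbounded on $W^{k,1}$. The range of $D\Bers_u\colon W^{k+2,1}\to W^{k,1}$ lies in $\ker\Lambda_1$, because $e^{-u}h'\to0$ at both ends. The range contains the Schwartz range $\ker\Lambda_1\cap\ker\Lambda_2\cap\SwRreal$. The kernel of an unbounded linear functional on a dense subspace is dense, so this Schwartz piece is dense in $\ker\Lambda_1\cap\SwRreal$, which is dense in $\ker\Lambda_1\subset W^{k,1}$. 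Hence the closure is $\ker\Lambda_1$.

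The delicate point is properness. Solving with $v\in\ker\Lambda_1$ gives $h'=2e^{u}\int_{-\infty}^x\psi^2v$, and $h\in W^{k+2,1}$ requires both $h'\in L^1$ and $\int h'=0$. I would exhibit $v\in\ker\Lambda_1\cap W^{k,1}$ with tails like $|x|^{-2}\log^{-2}|x|$, chosen so that $\int_{-\infty}^x\psi^2v\notin L^1$. That $v$ lies in the closure but not in the range. A range that is dense but proper is not closed, and by the open mapping theorem the operator then has no bounded inverse on its range. Constructing this explicit bad $v$ while keeping $\Lambda_1(v)=0$ is the step I expect to need the most care; the fix is a compactly supported correction.
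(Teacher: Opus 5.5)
Your Step~1 and the density half of Step~3 follow the paper's argument. The paper makes the dense-kernel fact explicit with the translates $\eta(\cdot-n)-\Lambda_1(\eta(\cdot-n))\sigma$, on which $\Lambda_2$ grows like $n$.

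The step that fails as written is the one you single out as delicate. A tail $|x|^{-2}\log^{-2}|x|$ has finite first moment, because $\int^\infty x^{-1}\log^{-2}x\dif x<\infty$. Hence for $v\in\ker\Lambda_1$ the primitive $\int_{-\infty}^x\psi^2v\dif t$ is $O(|x|^{-1}\log^{-2}|x|)$ at both ends and \emph{is} integrable, so the mechanism you describe does not occur. Such a $v$ with a one-signed tail does lie outside the range, but for a reason missing from your list of requirements: $h=\int_{-\infty}^xh'\dif t$ then approaches its limits only at rate $1/\log|x|$ and is not in $L^1$. Tails $|x|^{-2}$ would give the mechanism you intended.

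None of this is needed, because Step~1 already supplies a witness. Take $v_0\in\Sw\cap\ker\Lambda_1$ with $\Lambda_2(v_0)\neq0$. Suppose $L_{u,k}h=v_0$ with $h\in W^{k+2,1}(\R)$. Then $h$ and $h'$ tend to zero at $\pm\infty$, so your constant $c$ vanishes and $h'$ is given by your quadrature formula. This forces $0=\int_\R h'\dif x=-2\Lambda_2(v_0)$, a contradiction. This is the paper's proof, and it ties the failure of closedness to the discontinuity of $\Lambda_2$, which is the point of the theorem. A slowly decaying witness proves non-closedness without showing that.

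In Step~2, your first route to $R_q(0)$ does not make sense along the straight line $q+\varepsilon v$. For $v\in\ker\Lambda_1$ the zero-energy Wronskian along that line is $O(\varepsilon^2)$ but in general nonzero. So $q+\varepsilon v$ is generic and $R_{q+\varepsilon v}(0)=-1$ for small $\varepsilon\neq0$. Matching the Jost relation only computes the formal quantity $\lim_{x\to-\infty}\delta f_+(x,0)$. To call it the first variation of $R_q(0)$ you need a curve of exceptional potentials with velocity $v$.

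Your alternative supplies one, and it is shorter than the paper's construction. Take $h'$ from Step~1, which is Schwartz as soon as $\Lambda_1(v)=0$. The curve $w_\varepsilon=w+\tfrac{\varepsilon}{2}h'$ gives $q_\varepsilon=q+\varepsilon v-\tfrac{\varepsilon^2}{4}(h')^2$. Theorem~\ref{thm:threshold} then yields $R_{q_\varepsilon}(0)=-\tanh\bigl(\tfrac{\varepsilon}{2}\int_\R h'\dif x\bigr)=\tanh\bigl(\varepsilon\Lambda_2(v)\bigr)$ exactly. Positivity and the identification of the Jost solution are inherited from that theorem, rather than re-derived as the paper does for its Green-kernel curve $y_\varepsilon$.

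Two smaller repairs are needed in Step~1. First, the displayed Fubini identity is not absolutely convergent, since $\int_t^\infty\psi^{-2}\dif x=\infty$. The cancellation from $\Lambda_1(v)=0$ must be used before interchanging: split at $x_0$ and use the representation of $h'$ from $-\infty$ on the left and from $+\infty$ on the right, as the paper does. Second, moving $x_0$ changes $\Lambda_2$ by a multiple of $\Lambda_1$, which says nothing about independence. Rely instead on the strict monotonicity of $\widetilde\psi/\psi$.
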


The functional $\Lambda_2$ integrates against $\psi\widetilde\psi$, and $\psi\widetilde\psi$ grows linearly, so $\Lambda_2$ is continuous on $\SwR$ and unbounded on every $W^{k,1}(\R)$, none of which controls a spatial moment. The two constraints $\Lambda_1(v)=0$ and $\Lambda_2(v)=0$ are the same in both settings. What differs is the subspace those constraints define, because a continuous linear functional has closed kernel while a discontinuous one has dense kernel. In $\SwR$ the two constraints cut out a closed subspace of codimension two, and in $W^{k,1}(\R)$ the range is instead a dense proper subspace of the closed hyperplane $\ker\Lambda_1$. The normalization that Theorem \ref{thmB} pins down globally by $R_q(0)=0$ is therefore visible infinitesimally only in a topology that sees spatial moments, which is one reason the Schwartz class rather than the larger class of \cite{Lu25} is the right setting here.

\section{The diffeomorphism group and the real Bers map}\label{sec:group}

Throughout, $\SwR$ is the Schwartz space, $\SwRreal$ its real subspace, and $\widehat f(\xi)=\int_\R f(x)e^{-i\xi x}\dif x$. All diffeomorphisms are orientation-preserving.

The group $\Dg$ is the Schwartz analogue of the group $\operatorname{Diff}_{-\infty}(\R)$ of Bauer, Lu and Maor \cite{BLM22}, where $\varphi'-1$ is required to lie in $W^{\infty,1}(\R)$, and $\operatorname{Diff}_{-\infty}(\R)$ is the setting of \cite{Lu25}.

Since $\varphi'(x)\to1$ as $x\to\pm\infty$ and $\varphi'$ is positive and continuous, $\inf\varphi'>0$, so $u=\log\varphi'$ is defined, and writing $u=g\int_0^1(1+sg)^{-1}\dif s$ with $g=\varphi'-1\in\Sw$ shows $u\in\SwRreal$. Conversely, $e^{u}-1\in\Sw$ for $u\in\Sw$.

\begin{lemma}\label{lem:chart}
The map $\Phi_\infty(\varphi)=\log\varphi'$ is a bijection from $\Dg$ onto $\SwRreal$ with inverse $u\mapsto\varphi$, $\varphi(x)=x+\int_{-\infty}^x(e^{u(t)}-1)\dif t$. One has $\Phi_\infty(\varphi\circ\psi)=\Phi_\infty(\varphi)\circ\psi+\Phi_\infty(\psi)$, and $\Dg$ is a group.
\end{lemma}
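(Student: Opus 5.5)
I will check the four assertions of Lemma \ref{lem:chart} directly: that $\Phi_\infty$ maps into $\SwRreal$, that the proposed inverse produces an element of $\Dg$, that the two maps compose to the identity in both orders, and the cocycle identity. Group closure then follows from the cocycle identity.

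\emph{Well-definedness and inverse.} For $\varphi\in\Dg$, the preceding paragraph already shows $u=\log\varphi'\in\SwRreal$. Conversely, take $u\in\SwRreal$ and set $g=e^u-1$, which lies in $\Sw$ as noted above. Define $\varphi(x)=x+\int_{-\infty}^x g\dif t$; this is well defined because $g$ is integrable. Then $\varphi'=e^u>0$ and $\varphi'-1=g\in\SwRreal$. We also have $\varphi(x)-x\to0$ as $x\to-\infty$, and $\varphi(x)-x\to\int_\R g$ as $x\to+\infty$. Since this limit is finite, $\varphi(x)\to\pm\infty$ as $x\to\pm\infty$. Together with $\varphi'>0$, this makes $\varphi$ a smooth orientation-preserving bijection of $\R$ with smooth inverse, so $\varphi\in\Dg$.

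\emph{Bijectivity.} Clearly $\log\varphi'=u$. In the other order, an element $\varphi\in\Dg$ is determined by $\varphi'$ together with the normalization $\varphi(x)-x\to0$ at $-\infty$. Indeed, $\varphi(x)-x=\int_{-\infty}^x(\varphi'-1)\dif t$, because both sides have the same derivative and both tend to $0$ at $-\infty$. So the inverse map recovers $\varphi$, and $\Phi_\infty$ is a bijection.

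\emph{Cocycle identity.} By the chain rule, $(\varphi\circ\psi)'=\varphi'\circ\psi\cdot\psi'$, and taking logarithms gives the identity.

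\emph{Group structure.} This step is the only one with genuine content, namely the stability of the Schwartz class under composition. Given $\varphi,\psi\in\Dg$, I need to show:
\begin{enumerate}
\item $u\circ\psi\in\Sw$ for $u\in\Sw$;
\item $\varphi\circ\psi(x)-x\to0$ as $x\to-\infty$;
\item $\psi^{-1}\in\Dg$.
\end{enumerate}

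For (1), write $\psi(x)=x+h(x)$, where $h$ is bounded with all derivatives of $h$ Schwartz. Then $|\psi(x)|\ge|x|-C$, which transfers the polynomial decay of $u$ to $u\circ\psi$. By Fa\`a di Bruno, each derivative of $u\circ\psi$ is a finite sum of terms $(u^{(j)}\circ\psi)\cdot P$, where $P$ is a polynomial in $\psi',\psi'',\dots$, all of which are bounded. Hence $u\circ\psi$ is Schwartz.

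For (2), write $\varphi\circ\psi(x)-x=(\varphi(\psi(x))-\psi(x))+(\psi(x)-x)$. Both terms tend to $0$ as $x\to-\infty$, because $\psi(x)\to-\infty$.

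For (3), the cocycle identity applied to $\psi\circ\psi^{-1}=\mathrm{id}$ gives $\Phi_\infty(\psi^{-1})=-\Phi_\infty(\psi)\circ\psi^{-1}$. The inverse satisfies $\psi^{-1}(y)=y-h(\psi^{-1}(y))$, so it is again of the form identity plus bounded, with derivatives bounded by the inverse function theorem and induction. Therefore (1) applies and $\Phi_\infty(\psi^{-1})\in\SwRreal$. Finally, $\psi^{-1}(y)-y=-h(\psi^{-1}(y))\to0$ as $y\to-\infty$, since $h(x)\to0$ as $x\to-\infty$.

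The identity map lies in $\Dg$, so $\Dg$ is a group.
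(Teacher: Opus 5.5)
Your proof is correct and follows essentially the same route as the paper: both rest on the fact that composing with a map of the form identity plus a bounded function with bounded derivatives preserves $\SwR$, combined with chain-rule identities (you use the logarithmic cocycle, while the paper writes out $(\varphi\circ\psi)'-1$ and $(\varphi^{-1})'-1$ directly). Your treatment of the inverse is slightly more careful than the paper's: you check that $\psi^{-1}$ is identity plus bounded with bounded derivatives before invoking the composition step, whereas the paper composes with $\varphi^{-1}$ after stating the composition fact only for maps already known to lie in $\Dg$.
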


The corresponding statement in the $W^{\infty,1}$-setting is \cite[Corollary 3.1.2]{Lu25}. The $L^p$ charts $\varphi\mapsto p\bigl((\varphi')^{1/p}-1\bigr)$, of which $\Phi_\infty$ is the limit, are \cite[Theorem 2.2.1]{BLM22}.

\begin{proof}
If $\psi\in\Dg$, then
\[
\psi(x)=x+c_\pm+O(|x|^{-N})
\quad\text{as }x\to\pm\infty
\]
for every $N$, with $c_-=0$, and all derivatives of order at least two are Schwartz. Hence composition with $\psi$ preserves $\Sw$. The identities
\[
(\varphi\circ\psi)'-1=\big((\varphi'-1)\circ\psi\big)\psi'+(\psi'-1)
\]
and
\[
(\varphi^{-1})'-1=\left(\frac{1-\varphi'}{\varphi'}\right)\circ\varphi^{-1}
\]
show that composition and inversion preserve the defining decay. The normalization at $-\infty$ is immediate, and the chart formulas follow from the chain rule.
\end{proof}

\begin{lemma}\label{lem:riccati-chart}
The map
\[
\Theta\colon\Dg\longrightarrow\SwRzero,
\qquad
\Theta(\varphi)=\frac12(\log\varphi')'
\]
is a bijection. Its inverse is
\[
u_w(x)=2\int_{-\infty}^{x}w(t)\dif t,
\qquad
\varphi_w(x)=x+\int_{-\infty}^{x}\bigl(e^{u_w(t)}-1\bigr)\dif t.
\]
For $w=\Theta(\varphi)$ one has $\Bers(\varphi)=w'-w^2$.
\end{lemma}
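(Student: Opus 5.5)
The plan is to factor $\Theta$ through the chart of Lemma \ref{lem:chart}. By that lemma, $\Phi_\infty\colon\Dg\to\SwRreal$, $\varphi\mapsto u=\log\varphi'$, is a bijection with inverse $u\mapsto x+\int_{-\infty}^x(e^{u}-1)\dif t$. Therefore it suffices to show that the map $D\colon\SwRreal\to\SwRzero$, $u\mapsto u'/2$, is a bijection with inverse $w\mapsto u_w=2\int_{-\infty}^x w\dif t$. Once that is done, $\Theta=D\circ\Phi_\infty$ is a bijection, and its inverse is $\Phi_\infty^{-1}\circ D^{-1}$, which is exactly the displayed formula $w\mapsto\varphi_w$.

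First I show that $D$ is well defined. If $u\in\SwRreal$, then $u'\in\SwRreal$, and $\int_\R u'\dif x=\lim_{x\to\infty}u-\lim_{x\to-\infty}u=0$, so $D(u)\in\SwRzero$. Next, $D$ is injective: if $u_1'=u_2'$, then $u_1-u_2$ is constant, and that constant is zero because both functions tend to $0$ at $-\infty$.

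Surjectivity is the one step that needs an estimate. Given $w\in\SwRzero$, set $u_w(x)=2\int_{-\infty}^x w\dif t$. I need $u_w\in\SwRreal$. Its derivatives $u_w^{(j)}=2w^{(j-1)}$ for $j\ge1$ are Schwartz, so only rapid decay of $u_w$ itself has to be checked.
\begin{itemize}
\item As $x\to-\infty$, $|u_w(x)|\le 2\int_{-\infty}^x|w|\dif t=O(|x|^{-N})$ for every $N$, since $|w(t)|\le C_N(1+|t|)^{-N-2}$.
\item As $x\to+\infty$, the mean-zero condition gives $u_w(x)=-2\int_x^\infty w\dif t$, and the same tail bound gives $O(x^{-N})$.
\end{itemize}
Hence $u_w\in\SwRreal$ and $D(u_w)=w$. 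This tail estimate at $+\infty$ is the only place where zero mean is used, and it is the main (mild) obstacle: without it $u_w$ tends to $2I(w)\neq0$ and $\varphi'=e^{u_w}$ fails to tend to $1$.

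Finally, for $w=\Theta(\varphi)$ we have $u=\log\varphi'=2\int_{-\infty}^x w\dif t$. Then
\[
\frac{\varphi''}{\varphi'}=u'=2w,
\qquad
\frac{\varphi'''}{\varphi'}=u''+(u')^2=2w'+4w^2,
\]
so
\[
S(\varphi)=2w'+4w^2-\tfrac32(2w)^2=2w'-2w^2,
\]
and therefore $\Bers(\varphi)=\tfrac12S(\varphi)=w'-w^2$.
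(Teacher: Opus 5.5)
Your proof is correct and follows essentially the same route as the paper. You reduce to the chart $\Phi_\infty$ of Lemma \ref{lem:chart}, use the zero-mean condition to write $u_w(x)=-2\int_x^\infty w\dif t$ for rapid decay at $+\infty$, and check $\Bers(\varphi)=w'-w^2$ by differentiation; your factorization $\Theta=D\circ\Phi_\infty$, the injectivity argument via the normalization at $-\infty$, and the written-out Schwarzian computation simply spell out what the paper summarizes as ``the two constructions are inverse to one another'' and ``follows by differentiation.''
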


\begin{proof}
If $\varphi\in\Dg$ and $u=\log\varphi'$, then $u\in\SwRreal$ by Lemma \ref{lem:chart}, so $w=u'/2$ is Schwartz and $\int_\R w\dif x=\tfrac12\bigl(u(+\infty)-u(-\infty)\bigr)=0$. Conversely, if $w\in\SwRzero$, then $u_w'=2w$. The zero-mean condition gives $u_w(x)=-2\int_x^\infty w(t)\dif t$ for $x>0$, so $u_w$ and all its derivatives decay rapidly as $x\to\pm\infty$. Hence $u_w\in\SwRreal$, and Lemma \ref{lem:chart} produces $\varphi_w\in\Dg$. The two constructions are inverse to one another, and the last identity follows by differentiation.
\end{proof}

\begin{definition}[{Lu \cite[Section 3.4]{Lu25}}]\label{def:bers}
The real Bers map is
\[
\Bers\colon\Dg\to\SwRreal,\qquad \Bers(\varphi)=\tfrac12 S(\varphi)=\tfrac12 u''-\tfrac14(u')^{2},\qquad u=\log\varphi' .
\]
\end{definition}

The normalization is chosen so that \eqref{eq:potentialform} holds with $w=u'/2$, hence $\Bers(\varphi)=w'-w^{2}$ is, up to sign, the Miura transform of $w$ \cite{Miura68}. Lu proved the corresponding statement in the $W^{\infty,1}$ setting \cite[Theorem 3.4.3]{Lu25}. We record its Schwartz specialization. The underlying projective correspondence is classical; see \cite{OT05}.

\begin{proposition}[{Schwartz specialization of \cite[Theorem 3.4.3]{Lu25}}]\label{prop:injective}
$\Bers$ is injective on $\Dg$ and
\[
\Bers(\Dg)
=\big\{\tfrac12 f''-\tfrac14(f')^2\;\big|\;f\in\SwRreal\big\}
=\big\{w'-w^2\;\big|\;w\in\SwRzero\big\}.
\]
Moreover, $\psi=e^{-u/2}$ satisfies $H_{\Bers(\varphi)}\psi=0$, $\psi>0$, $\psi(x)\to1$ as $x\to\pm\infty$, and
\begin{equation}\label{eq:reconstruction}
\varphi(x)=x+\int_{-\infty}^{x}\big(\psi(t)^{-2}-1\big)\dif t .
\end{equation}
\end{proposition}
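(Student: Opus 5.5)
The plan is to read everything off the chart $u=\log\varphi'$ from Lemma \ref{lem:chart} together with the Riccati chart of Lemma \ref{lem:riccati-chart}.

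\emph{Image description.} By Lemma \ref{lem:chart}, $\varphi\mapsto u$ is a bijection $\Dg\to\SwRreal$. Definition \ref{def:bers} gives $\Bers(\varphi)=\tfrac12u''-\tfrac14(u')^2$, so the first description of $\Bers(\Dg)$ is immediate. Lemma \ref{lem:riccati-chart} says $w=u'/2$ ranges exactly over $\SwRzero$ and that $\Bers(\varphi)=w'-w^2$, which gives the second description.

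\emph{The zero-energy solution.} With $\psi=e^{-u/2}$ we have $\psi'=-w\psi$. Differentiating again,
\[
\psi''=(-w'+w^2)\psi=-q\psi,
\qquad q=\Bers(\varphi),
\]
so $H_q\psi=-\psi''-q\psi=0$. Positivity of $\psi$ is clear. The limits $\psi\to1$ at $\pm\infty$ hold because $u\in\SwRreal$ vanishes at both ends. Since $\psi^{-2}=e^{u}=\varphi'$, formula \eqref{eq:reconstruction} is just the inverse formula of Lemma \ref{lem:chart}.

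\emph{Injectivity.} Suppose $\Bers(\varphi_1)=\Bers(\varphi_2)=q$. Then $\psi_1$ and $\psi_2$ both solve $\psi''=-q\psi$, so their Wronskian is constant. I would show this constant is zero: each $\psi_j\to1$ at $+\infty$, and $\psi_j'=-w_j\psi_j$ is Schwartz, so
\[
\Wr(\psi_1,\psi_2)=\psi_1\psi_2'-\psi_1'\psi_2\to0 .
\]
Hence $\psi_2=c\,\psi_1$. Letting $x\to+\infty$ forces $c=1$. Therefore $u_1=u_2$, and Lemma \ref{lem:chart} gives $\varphi_1=\varphi_2$.

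There is no real obstacle here. The only care needed is the asymptotic evaluation of the Wronskian, which uses only the decay of $w_j$ and the normalization $\psi_j\to1$.
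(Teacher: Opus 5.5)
Your proof is correct. For the image description and for $H_q\psi=0$ you follow the paper. Your computation $\psi'=-w\psi$, $\psi''=(w^2-w')\psi$ is the same fact the paper expresses as $A\psi=0$ for $A=\partial_x+w$ in the factorization $H_q=A^*A$.

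For injectivity you take a different route. The paper argues on the group side:
\begin{itemize}
\item the cocycle identity $S(\varphi\circ\psi)=(S(\varphi)\circ\psi)(\psi')^2+S(\psi)$ gives $S(\varphi_1\circ\varphi_2^{-1})=0$;
\item the classification of maps with vanishing Schwarzian makes $\varphi_1\circ\varphi_2^{-1}$ a M\"obius map, and the absence of a pole makes it affine, $ax+b$ with $a>0$;
\item the limit at $+\infty$ fixes $a=1$, and the normalization at $-\infty$ fixes $b=0$.
\end{itemize}

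You linearize instead. Both $\psi_j=(\varphi_j')^{-1/2}$ solve the same equation $\psi''+q\psi=0$. Their constant Wronskian vanishes by evaluation at $+\infty$, and $\psi_j\to1$ then forces $\psi_1=\psi_2$. The condition at $-\infty$ enters only through the chart bijection of Lemma \ref{lem:chart}.

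What your version buys:
\begin{itemize}
\item It needs neither the group structure of $\Dg$ (composition and inversion) nor the M\"obius classification. The usual proof of that classification is this same linearization, since $S(g)=-2\,(g')^{1/2}\bigl((g')^{-1/2}\bigr)''$.
\item It shows directly that $\psi$ is the unique zero-energy solution with $\psi\to1$ and $\psi'\to0$ at $+\infty$, i.e.\ $f_+(\cdot,0)$. This is exactly the mechanism behind $\varphi'=f_+(\cdot,0)^{-2}$ in Theorem \ref{thm:image}.
\end{itemize}

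What the paper's version buys: it is the classical projective argument. It shows that the fibre of $S$ over $S(\varphi)$ in $\operatorname{Diff}^{+}(\R)$ is the affine orbit $\{a\varphi+b\}$. Injectivity then says precisely that the two normalizations defining $\Dg$ select one point in each such orbit.
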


\begin{proof}
The image formulas follow from Lemmas \ref{lem:chart} and \ref{lem:riccati-chart}. For injectivity, suppose $S(\varphi)=S(\psi_1)$. The cocycle identity $S(\varphi\circ\psi)=(S(\varphi)\circ\psi)(\psi')^{2}+S(\psi)$ applied to $\varphi=(\varphi\circ\psi_1^{-1})\circ\psi_1$ gives $S(\varphi\circ\psi_1^{-1})=0$, so $g=\varphi\circ\psi_1^{-1}$ is a M\"obius transformation; a M\"obius map $g(x)=(\alpha x+\beta)/(\gamma x+\delta)$ which is a diffeomorphism of $\R$ has no pole, so $\gamma=0$ and $g(x)=ax+b$ with $a>0$. Then $\varphi'=a\,\psi_1'$ and letting $x\to+\infty$ gives $a=1$. The normalization at $-\infty$ gives $b=0$, hence $\varphi=\psi_1$. The factorization $H_q=A^{*}A$ with $A=\partial+w$ yields $A\psi=(-\tfrac12u'+\tfrac12u')\psi=0$, hence $H_q\psi=0$. Positivity and the limits are clear, and \eqref{eq:reconstruction} inverts $\psi=(\varphi')^{-1/2}$.
\end{proof}

\begin{remark}\label{rem:miura}
The Miura characterization detects nonnegativity but not the two-sided normalization of the positive zero-energy solution. Every $q$ in the image satisfies $\int_\R q\dif x=-\int_\R w^{2}\dif x\leq0$, which is the obstruction used in \cite[Theorem 3.4.3]{Lu25} to show that the image is not open. Theorem \ref{thm:image} shows that the exact normalization is $R_q(0)=0$.
\end{remark}

\section{Jost solutions and the scattering coefficients}\label{sec:jost}

The facts in this section are classical. We use the normalizations of Faddeev \cite{Fad64,Fad74}, Marchenko \cite{Mar86}, and Deift and Trubowitz \cite{DT79}. We record only the statements needed below. Let $q\in\SwRreal$ and
\[
H_q=-\partial_x^{2}-q \qquad\text{on } L^{2}(\R),
\]
a short-range operator with essential spectrum $[0,\infty)$ and finitely many simple negative eigenvalues, the bound states. For $\im k\geq0$, the Jost solutions $f_{\pm}(x,k)$ of
\begin{equation}\label{eq:jost}
-y''-q\,y=k^{2}y
\end{equation}
are fixed by $f_{+}(x,k)=e^{ikx}(1+o(1))$ as $x\to+\infty$ and $f_{-}(x,k)=e^{-ikx}(1+o(1))$ as $x\to-\infty$. For real $k\neq0$,
\begin{equation}\label{eq:scatterrel}
f_{-}(x,k)=a(k)\,f_{+}(x,-k)+b(k)\,f_{+}(x,k),
\end{equation}
and $T_q(k)=1/a(k)$, $R_q(k)=b(k)/a(k)$. Thus $R_q$ is the right reflection coefficient of $H_q$. From \eqref{eq:scatterrel} and $\Wr\big(f_+(\cdot,-k),f_+(\cdot,k)\big)=2ik$,
\begin{equation}\label{eq:ab}
a(k)=\frac{\Wr\big(f_{-}(\cdot,k),f_{+}(\cdot,k)\big)}{2ik},
\qquad
b(k)=\frac{\Wr\big(f_{+}(\cdot,-k),f_{-}(\cdot,k)\big)}{2ik}.
\end{equation}
One has $|a|^{2}=1+|b|^{2}$, so $|T_q|^{2}+|R_q|^{2}=1$, together with $a(-k)=\overline{a(k)}$ and $b(-k)=\overline{b(k)}$; the function $a$ extends holomorphically to $\im k>0$ with $a(k)\to1$ at infinity, and its zeros there are exactly the points $i\kappa_j$ with $-\kappa_j^{2}$ a bound state.

\begin{definition}\label{def:exceptional}
A real short-range potential $q$ is called \emph{exceptional at zero energy} if the Wronskian 
\[
\Wr\bigl(f_-(\cdot,0),f_+(\cdot,0)\bigr)=0.
\]
Equivalently, the two zero-energy Jost solutions are linearly dependent, so there is a unique real number $c(q)\neq0$ such that
\[
f_-(\cdot,0)=c(q)\,f_+(\cdot,0).
\]
If the Wronskian is nonzero, $q$ is called \emph{generic at zero energy}.
\end{definition}

\begin{lemma}\label{lem:regularity}
Let $q\in\SwRreal$. For each fixed $x$, the functions
\[
m_\pm(x,k)=e^{\mp ikx}f_\pm(x,k)
\]
are holomorphic for $\im k>0$ and smooth up to the real axis from above. Their $k$ derivatives are uniform when $x$ is restricted to the corresponding half-line. As $|k|\to\infty$ along the real axis, $b$ and all of its derivatives decay faster than every power, while
\[
\partial_k^j\bigl(a(k)-1\bigr)=O\bigl(|k|^{-j-1}\bigr)
\]
for every $j\geq0$.

At $k=0$, exactly one of the cases in Definition \ref{def:exceptional} occurs. If $q$ is generic, then
\[
T_q(0)=0,\qquad R_q(0)=-1.
\]
If $q$ is exceptional, then $a$ and $b$ are smooth at zero.
\end{lemma}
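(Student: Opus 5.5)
The plan is to prove Lemma \ref{lem:regularity} from the Volterra integral equations for $m_\pm$:
\[
m_+(x,k)=1-\int_x^\infty \frac{e^{2ik(t-x)}-1}{2ik}\,q(t)\,m_+(t,k)\dif t ,
\]
with the analogous equation for $m_-$ on $(-\infty,x]$. The kernel $D_k(s)=(e^{2iks}-1)/(2ik)$ with $s=t-x\ge0$ is entire in $k$ and obeys $|D_k(s)|\le s$ for $\im k\ge0$. Since $q$ is Schwartz, $\int_x^\infty (1+|t|)^N|q(t)|\dif t<\infty$ for every $N$, so the Neumann series converges uniformly for $x$ in a half-line $[x_0,\infty)$ and $\im k\ge0$.

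Differentiating under the integral, $\partial_k^j D_k(s)$ is bounded by $C_j s^{j+1}$. The $k$-derivatives of $m_\pm$ therefore satisfy Volterra equations with source terms controlled by moments of $q$, and these are finite for Schwartz $q$. This gives holomorphy in $\im k>0$, smoothness up to the real axis, and uniform bounds on half-lines. It is the classical Deift--Trubowitz argument \cite{DT79}, and I would cite it for details.

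For the large-$|k|$ asymptotics I would use the same equation in the form
\[
m_+(x,k)=1+\frac{1}{2ik}\int_x^\infty q\dif t+O(|k|^{-2}),
\]
with derivatives, obtained by integrating by parts in $t$ against $e^{2ik(t-x)}$. Writing $a$ through $m_\pm$ gives
\[
a(k)=1-\frac{1}{2ik}\int_\R q(t)\,m_+(t,k)\dif t .
\]
This yields $\partial_k^j(a-1)=O(|k|^{-j-1})$. For $b$, the representation
\[
b(k)=\frac{1}{2ik}\int_\R e^{2ikt}\,q(t)\,m_-(t,k)\dif t
\]
exhibits $b$ as an oscillatory integral of a function smooth in $t$ with symbol-type bounds in $k$. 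Repeated integration by parts in $t$, which uses every derivative of $q$, gives decay faster than every power, together with all $k$-derivatives.

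At $k=0$ the dichotomy is immediate, since either the Wronskian $W_0=\Wr(f_-(\cdot,0),f_+(\cdot,0))$ vanishes or it does not. In the generic case, \eqref{eq:ab} together with continuity of $f_\pm$ at $k=0$ gives $a(k)\sim W_0/(2ik)$, so $T_q(0)=0$. For $b$, I would use $f_+(x,-k)\to f_+(x,0)$ as $k\to0$, so that $2ik\,b(k)\to\Wr(f_+(\cdot,0),f_-(\cdot,0))=-W_0$. Hence $R_q(k)=b/a\to -1$.

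In the exceptional case, both numerators $\Wr(f_-,f_+)(k)$ and $\Wr(f_+(\cdot,-k),f_-(\cdot,k))$ are smooth in $k$ near $0$ and vanish at $k=0$. The second vanishes because $f_-(\cdot,0)=c\,f_+(\cdot,0)$. Dividing a smooth function vanishing at $0$ by $2ik$ gives a smooth function, via $g(k)/k=\int_0^1 g'(sk)\dif s$. So $a$ and $b$ are smooth at zero.

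The main obstacle is smoothness of $f_+(x,-k)$ for real $k$ near $0$, because $-k$ leaves the closed upper half-plane when $k$ is taken in the upper half-plane. This is harmless, however, because only real $k$ matters for $b$, and $m_+$ is smooth on the whole real axis. The genuinely delicate point is the uniformity of the $k$-derivative bounds at $k=0$, where the factor $1/k$ in the kernel is absent only thanks to the representation $D_k(s)=\int_0^s e^{2ik\sigma}\dif\sigma$. I would carry this representation throughout the argument.
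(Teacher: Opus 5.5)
Your proposal is correct and follows the paper's proof essentially step for step. Volterra equations with $D_k(s)=\int_0^s e^{2ikr}\dif r$ and moment bounds give smoothness of $m_\pm$ up to and including $k=0$. Oscillatory-integral formulas for $a-1$ and $b$, with integration by parts, give the large-$|k|$ behaviour. In the exceptional case the vanishing Wronskian numerators are divided by $k$. You also derive the generic limits $T_q(0)=0$, $R_q(0)=-1$ directly instead of citing \cite{DT79}.

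One harmless slip: for $H_q=-\partial_x^2-q$ the correct formulas are $a(k)=1+\frac{1}{2ik}\int_\R q\,m_\pm\dif t$ and $b(k)=-\frac{1}{2ik}\int_\R e^{-2ikt}q\,m_-\dif t$. Your signs and the exponent in $b$ are off, but this does not affect the decay estimates.
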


\begin{proof}
The right Jost factor satisfies
\[
m_+(x,k)=1-\int_x^\infty D_k(t-x)q(t)m_+(t,k)\dif t,
\qquad
D_k(s)=\frac{e^{2iks}-1}{2ik}.
\]
For $s\geq0$ and $\im k\geq0$, one has
\[
|D_k(s)|\leq \min\{s,|k|^{-1}\}.
\]
Each $k$ derivative of $D_k$ is bounded by a constant times a power of $s$. The Schwartz decay of $q$ therefore permits differentiation of the Volterra series to every order. The left Jost factor is treated in the same way.

For real $k$ one has $f_\pm(x,-k)=\overline{f_\pm(x,k)}$. The coefficient $b$ admits the representation
\[
b(k)=-\frac1{2ik}\int_\R q(t)m_-(t,k)e^{-2ikt}\dif t.
\]
Repeated integration by parts shows that $b$ is rapidly decreasing at infinity, together with all of its derivatives. The Volterra expansion in the formula
\[
a(k)=1+\frac1{2ik}\int_\R q(t)m_-(t,k)\dif t
\]
gives the stated decay estimates for $a-1$. In general $a-1$ is not a Schwartz function. Its leading term is $(2ik)^{-1}\int q$.

The zero-energy dichotomy and the limiting values in the generic case are classical \cite[Section 2]{DT79}. For the exceptional case write
\[
D_k(s)=\int_0^s e^{2ikr}\dif r,
\qquad
\partial_k^m D_k(s)=(2i)^m\int_0^s r^m e^{2ikr}\dif r,
\]
so that $|\partial_k^m D_k(s)|\leq 2^m|s|^{m+1}/(m+1)$. The $m$th $k$ derivative of the Volterra equation then involves at worst the moment $\int_\R(1+|t|)^{m+1}|q|\dif t$, which is finite for $q\in\SwR$. Successive differentiation and induction show that $m_\pm(x,k)$ is $C^\infty$ in the real variable $k$ near zero for each fixed $x$, hence so are the Wronskian numerators
\[
M(k)=\Wr\bigl(f_-(\cdot,k),f_+(\cdot,k)\bigr),
\qquad
N(k)=\Wr\bigl(f_+(\cdot,-k),f_-(\cdot,k)\bigr).
\]
In the exceptional case $M(0)=N(0)=0$, so $M(k)/k=\int_0^1M'(tk)\dif t$ and $N(k)/k=\int_0^1N'(tk)\dif t$ extend smoothly across the origin. Since $a=M/2ik$ and $b=N/2ik$, both $a$ and $b$ are $C^\infty$ at zero.
\end{proof}

\begin{lemma}\label{lem:zerodecay}
Let $q\in\SwRreal$ and let $y=f_+(\cdot,0)$. Then
\begin{equation}\label{eq:volt0}
y(x)=1-\int_x^{\infty}(t-x)\,q(t)\,y(t)\dif t .
\end{equation}
The function $y$ is bounded on $[0,\infty)$. For every $N\geq0$, there are constants $C_N$ and $C_{n,N}$ such that
\[
|y(x)-1|\leq C_N(1+x)^{-N},
\qquad
|y^{(n)}(x)|\leq C_{n,N}(1+x)^{-N}\quad(n\geq1)
\]
for all $x\geq0$. The analogous statements hold for $f_-(\cdot,0)$ on $(-\infty,0]$.
\end{lemma}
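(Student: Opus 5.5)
The plan is to obtain \eqref{eq:volt0} as the $k\to0$ limit of the Volterra equation for $m_+$ from the proof of Lemma \ref{lem:regularity}, and then to read off boundedness and decay from that integral equation.

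\textbf{Step 1: the integral equation.} The equation for $m_+$ is
\[
m_+(x,k)=1-\int_x^\infty D_k(t-x)q(t)m_+(t,k)\dif t,
\]
with $D_k(s)\to s$ as $k\to0$ and $|D_k(s)|\le s$ uniformly. Since $f_+(x,0)=m_+(x,0)$, letting $k\to0$ should give \eqref{eq:volt0}. To avoid any limiting argument, I will instead solve \eqref{eq:volt0} directly by Picard iteration on $[x_1,\infty)$ for any fixed $x_1$. The $n$-th iterate is bounded by
\[
\frac{1}{n!}\Bigl(\int_{x}^\infty (t-x)|q(t)|\dif t\Bigr)^n,
\]
using the standard Volterra estimate with kernel $(t-x)\le(t-x_1)$ when $x\ge x_1$. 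The resulting solution $y$ is $C^2$, satisfies $-y''-qy=0$, and tends to $1$ at $+\infty$, so it is the Jost solution at $k=0$.

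\textbf{Step 2: boundedness.} On $[0,\infty)$ the iteration bound gives
\[
|y(x)|\le \exp\Bigl(\int_0^\infty t|q(t)|\dif t\Bigr),
\]
because $(t-x)\le t$ for $t\ge x\ge 0$. This is finite since $q$ is Schwartz.

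\textbf{Step 3: decay.} Write $\|y\|_\infty=M$ on $[0,\infty)$. Then for $x\ge0$,
\[
|y(x)-1|\le M\int_x^\infty (t-x)|q(t)|\dif t\le M\int_x^\infty t|q(t)|\dif t\le C_N(1+x)^{-N},
\]
since $t|q(t)|\le C(1+t)^{-N-2}$. Differentiating \eqref{eq:volt0} gives $y'(x)=\int_x^\infty q(t)y(t)\dif t$, which is $O((1+x)^{-N})$ by the same argument. For $n\ge2$, the equation $y''=-qy$ and the Leibniz rule give
\[
y^{(n)}=-\sum_j \binom{n-2}{j} q^{(n-2-j)}y^{(j)}.
\]
By induction on $n$, using that $y^{(j)}$ is bounded on $[0,\infty)$ for $j<n$ and that every derivative of $q$ is rapidly decreasing, each term is $O((1+x)^{-N})$.

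\textbf{Step 4: the left solution.} The statement for $f_-(\cdot,0)$ on $(-\infty,0]$ follows by reflection: apply the above to $\tilde q(x)=q(-x)$, noting that $f_-(x,0;q)=f_+(-x,0;\tilde q)$.

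The only point needing care is Step 1, namely identifying the solution of \eqref{eq:volt0} with $f_+(\cdot,0)$. Solving the equation directly settles this without taking any limit in $k$. Everything else is routine Volterra estimation.
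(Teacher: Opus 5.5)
Your proof is correct and follows essentially the same route as the paper. Both arguments use Volterra iteration controlled by $\sigma(x)=\int_x^\infty (t-x)|q(t)|\dif t$, read off decay of $y-1$ and $y'$ from the integral equation, handle higher derivatives by Leibniz induction on $y''=-qy$, and obtain $f_-$ by reflection. The differences are cosmetic: your factorial bound $\sigma^n/n!$ gives convergence and $|y|\le e^{\sigma}$ on all of $[0,\infty)$ at once, whereas the paper uses $|y_j|\le\sigma^j$ on a half-line where $\sigma\le\tfrac12$ and covers $[0,x_0]$ by compactness; and your identification of the solution with $f_+(\cdot,0)$ tacitly uses that a zero-energy solution tending to $1$ at $+\infty$ is unique, since any independent solution grows linearly.
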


\begin{proof}
Equation \eqref{eq:volt0} is the zero-energy Volterra equation. Set
\[
\sigma(x)=\int_x^{\infty}(t-x)|q(t)|\dif t .
\]
For $x\geq0$ and every $N$,
\begin{equation}\label{eq:sigmadecay}
\sigma(x)\leq\int_x^{\infty}t|q(t)|\dif t\leq C_N(1+x)^{-N},
\end{equation}
so $\sigma$ decreases to zero faster than every power. In the Volterra series $y=\sum_{j\geq0}y_j$, with $y_0=1$ and
\[
y_{j+1}(x)=-\int_x^{\infty}(t-x)q(t)y_j(t)\dif t,
\]
induction gives $|y_j(x)|\leq\sigma(x)^j$. Choose $x_0\geq0$ so that $\sigma(x_0)\leq\tfrac12$. Then, for $x\geq x_0$,
\[
|y(x)-1|\leq\frac{\sigma(x)}{1-\sigma(x)}\leq2\sigma(x),
\qquad |y(x)|\leq2.
\]
The first estimate follows from \eqref{eq:sigmadecay}; after enlarging its constant it also holds on $[0,x_0]$. Thus $y$ is bounded on $[0,\infty)$.

Differentiating \eqref{eq:volt0} gives
\[
y'(x)=\int_x^{\infty}q(t)y(t)\dif t,
\]
which has rapid decay. For $n\geq2$, differentiate $y''=-qy$ a total of $n-2$ times. This gives
\[
y^{(n)}=-\sum_{j=0}^{n-2}\binom{n-2}{j}q^{(j)}y^{(n-2-j)}.
\]
Induction on $n$ proves rapid decay of every derivative. The proof for $f_-(\cdot,0)$ is identical after reversing the direction of integration.
\end{proof}

\begin{remark}\label{rem:moments}
The restriction to Schwartz decay is essential, since $W^{\infty,1}(\R)$ controls derivatives but not spatial moments. For $u=\sum_{n\geq2}n^{-2}\zeta(x-n)$ with $\zeta$ a fixed bump, $u\in W^{\infty,1}$ while the corresponding potential can have infinite first moment, so the zero-energy regularity used below need not hold in Lu's larger class.
\end{remark}

\begin{lemma}\label{lem:kexpansion}
Let $q\in\SwRreal$ be exceptional with $f_{-}(\cdot,0)=c\,f_{+}(\cdot,0)$. Then
\[
a_q(0)=\frac{c+c^{-1}}{2},\qquad b_q(0)=\frac{c-c^{-1}}{2},\qquad
T_q(0)=\frac{2c}{1+c^{2}},\qquad R_q(0)=\frac{c^{2}-1}{c^{2}+1}.
\]
In particular $a_q(0)\neq0$, one has $|R_q(0)|<1$, and $R_q(0)=0$ exactly when $c=\pm1$.
\end{lemma}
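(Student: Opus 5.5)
The plan is to compute $a_q(0)$ and $b_q(0)$ as limits of the Wronskian formulas \eqref{eq:ab} as $k\to0$. By Lemma \ref{lem:regularity} the coefficients $a$ and $b$ are smooth at zero in the exceptional case, so the limits exist. Write
\[
M(k)=\Wr\bigl(f_-(\cdot,k),f_+(\cdot,k)\bigr),\qquad N(k)=\Wr\bigl(f_+(\cdot,-k),f_-(\cdot,k)\bigr).
\]
Both vanish at $k=0$, so $a(0)=M'(0)/2i$ and $b(0)=N'(0)/2i$. The task is therefore to compute the $k$-derivatives at zero.

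To get these derivatives, I would differentiate the Jost solutions in $k$ at $k=0$. Put $\dot f_\pm=\partial_k f_\pm(\cdot,k)\big|_{k=0}$, which exists by Lemma \ref{lem:regularity}. Differentiating \eqref{eq:jost} once in $k$ and setting $k=0$ shows that $\dot f_\pm$ solves $-y''-qy=0$, because the $k^2$ term has vanishing first derivative at $0$. Hence $\dot f_\pm$ is a zero-energy solution. Its asymptotics come from differentiating $f_\pm=e^{\pm ikx}m_\pm$: as $x\to\pm\infty$,
\[
\dot f_\pm(x)=\pm ix+\dot m_\pm(x,0)+o(1)\cdot\text{(lower order)}.
\]
Here the uniform $k$-derivative bounds on the half-line give $\dot m_\pm(x,0)=o(x)$, and they also give the derivative $\dot f_\pm'(x)\to\pm i$ on the corresponding side.

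Now write $y=f_+(\cdot,0)$, so $f_-(\cdot,0)=cy$. Then $y\to1$ at $+\infty$ and $y\to c^{-1}$ at $-\infty$, with derivatives tending to zero by Lemma \ref{lem:zerodecay} and its mirror image. Differentiating $M$ gives
\[
M'(0)=\Wr(\dot f_-,y)+\Wr(cy,\dot f_+).
\]
Each term is a Wronskian of two zero-energy solutions, hence constant in $x$, so each can be evaluated at whichever end is convenient. For $\Wr(cy,\dot f_+)$, evaluate at $x\to+\infty$; this gives $c\cdot(y\dot f_+'-y'\dot f_+)\to c\cdot i$. The cross term $y'\dot f_+$ vanishes in the limit because $y'$ decays rapidly while $\dot f_+$ grows at most linearly. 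For $\Wr(\dot f_-,y)$, evaluate at $-\infty$, where $y\to c^{-1}$ and $\dot f_-'\to -i$; this gives $-(-i)c^{-1}=ic^{-1}$. Therefore $M'(0)=i(c+c^{-1})$ and $a(0)=(c+c^{-1})/2$.

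Similarly,
\[
N'(0)=\Wr(-\dot f_+,cy)+\Wr(y,\dot f_-).
\]
The first term is $-ic$, evaluated at $+\infty$. The second is $y\dot f_-'-y'\dot f_-$ evaluated at $-\infty$, which is $c^{-1}(-i)$. This gives the stated $b(0)=(c-c^{-1})/2$, up to checking signs carefully. The formulas for $T_q(0)=1/a_q(0)$ and $R_q(0)=b_q(0)/a_q(0)$ are then algebra. Since $c$ is real and nonzero, $|c+c^{-1}|\ge2$, so $a_q(0)\neq0$ and $|R_q(0)|=|c^2-1|/(c^2+1)<1$. Finally $R_q(0)=0$ if and only if $c^2=1$.

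The main obstacle is justifying the asymptotics of $\dot f_\pm$ and $\dot f_\pm'$. In particular one needs that $\partial_k m_\pm(x,0)$ and its $x$-derivative behave well enough as $x\to\pm\infty$ that the boundary terms vanish. I would handle this by differentiating the Volterra equation for $m_+$ in $k$ at $k=0$. This expresses $\dot m_+(x,0)$ through integrals of $\partial_kD_k(t-x)|_{k=0}=i(t-x)^2$ against $q\,m_+$ and $q\,\dot m_+$, which are rapidly decaying as $x\to+\infty$ by the moment bounds of Lemma \ref{lem:zerodecay}. The same argument applies to the left solution.
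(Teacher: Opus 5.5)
Your route differs from the paper's, and the method works, but one boundary evaluation has the wrong sign. In $N'(0)=W(-\dot f_+,cy)+W(y,\dot f_-)$ the first term is
\[
W(-\dot f_+,cy)=c\bigl(\dot f_+'\,y-\dot f_+\,y'\bigr)\longrightarrow ic\qquad(x\to+\infty),
\]
not $-ic$. With the values you wrote, $N'(0)=-i(c+c^{-1})$. That gives $b_q(0)=-a_q(0)$ and $R_q(0)=-1$, which is the generic value. It already fails for $q=0$, where $N\equiv0$. With the correct sign, $N'(0)=ic-ic^{-1}=i(c-c^{-1})$ and $b_q(0)=(c-c^{-1})/2$ as claimed.

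The rest of your computation is correct. Your values $W(cy,\dot f_+)=ic$ and $W(\dot f_-,y)=ic^{-1}$ give $a_q(0)=(c+c^{-1})/2$. Your plan for the asymptotics of $\dot f_\pm$ is also sound: differentiate the Volterra equation with $\partial_kD_k(s)|_{k=0}=is^2$. This shows that $\dot m_+(x,0)$ and $\partial_x\dot m_+(x,0)$ decay rapidly as $x\to+\infty$, so $\dot f_+=ix+o(1)$ and $\dot f_+'\to i$ there. The mirror argument handles $\dot f_-$ at $-\infty$.

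The paper never differentiates the Jost solutions in $k$. It uses only the continuity of $a$ and $b$ at $0$ from Lemma \ref{lem:regularity}:
\begin{itemize}
\item letting $k\to0$ in \eqref{eq:scatterrel} gives $a_q(0)+b_q(0)=c$;
\item the symmetry $a(-k)=\overline{a(k)}$ makes $a_q(0)$ and $b_q(0)$ real;
\item the limit of $|a|^2-|b|^2=1$ then gives $a_q(0)^2-b_q(0)^2=1$, hence $a_q(0)-b_q(0)=c^{-1}$.
\end{itemize}
That argument is a few lines and needs nothing about $\partial_k m_\pm$ beyond Lemma \ref{lem:regularity}.

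Your computation costs an extra half-line asymptotic analysis of $\dot f_\pm$. In return it computes $a_q(0)$ and $b_q(0)$ separately from $M'(0)$ and $N'(0)$. It does not invoke unitarity or the conjugation symmetry, and the reality of both values falls out of the calculation.
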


\begin{proof}
By Lemma \ref{lem:regularity}, the coefficients $a$ and $b$ are continuous at the origin, and $f_+(x,\pm k)\to f_+(x,0)$ as $k\to0$. Letting $k\to0$ in \eqref{eq:scatterrel} gives
\[
c\,f_+(x,0)=f_-(x,0)=\bigl(a_q(0)+b_q(0)\bigr)f_+(x,0),
\]
and $f_+(\cdot,0)$ does not vanish identically, so $a_q(0)+b_q(0)=c$. The symmetries $a(-k)=\overline{a(k)}$ and $b(-k)=\overline{b(k)}$ make $a_q(0)$ and $b_q(0)$ real, and letting $k\to0$ in $|a|^{2}=1+|b|^{2}$ gives
\[
a_q(0)^{2}-b_q(0)^{2}=1,
\qquad\text{hence}\qquad
a_q(0)-b_q(0)=c^{-1}.
\]
Solving these two linear equations gives the first two formulas, and $T_q(0)=1/a_q(0)$, $R_q(0)=b_q(0)/a_q(0)$ give the other two. Since $c$ is real and nonzero, $c+c^{-1}\neq0$, so $a_q(0)\neq0$; and $|c^{2}-1|<c^{2}+1$ gives $|R_q(0)|<1$, with $R_q(0)=0$ exactly when $c^{2}=1$.
\end{proof}

\begin{lemma}\label{lem:positive-c}
Let $q\in\SwRreal$ be exceptional and suppose that $H_q$ has no bound states. If $f_-(\cdot,0)=c\,f_+(\cdot,0)$, then $c>0$.
\end{lemma}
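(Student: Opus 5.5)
Let $y=f_+(\cdot,0)$. By Definition \ref{def:exceptional}, $y$ is a real zero-energy solution with $f_-(\cdot,0)=c\,y$. By Lemma \ref{lem:zerodecay}, $y(x)\to1$ as $x\to+\infty$, while $c\,y(x)=f_-(x,0)\to1$ as $x\to-\infty$, so $y(x)\to c^{-1}$ as $x\to-\infty$. The sign of $c$ is therefore the sign of $y$ near $-\infty$, and $y$ is positive near $+\infty$. So it suffices to show that $y$ has no zero on $\R$: a continuous function with no zero is positive near $+\infty$ and hence positive everywhere, which gives $c^{-1}>0$. I would argue by contradiction, assuming $y(x_0)=0$ for some $x_0$, and produce a negative eigenvalue of $H_q$, contradicting the absence of bound states.

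The key tool is Sturm oscillation, realized variationally. First, zeros of a nontrivial solution are simple, so $y'(x_0)\neq0$. Since $y$ tends to nonzero limits at both ends, its zeros form a finite set, and near $+\infty$ and $-\infty$ it stays away from zero. Take $x_1$ to be the largest zero, so that $y\neq0$ on $(x_1,\infty)$. Define
\[
\eta=y\,\mathbf 1_{(x_1,\infty)} .
\]
This function is not in $L^2$ because it tends to $1$, so I will cut it off: set $\eta_R=\eta\,\chi(\cdot/R)$ with $\chi$ a smooth cutoff equal to $1$ on $[-1,1]$ and supported in $[-2,2]$. Integrating by parts on $(x_1,\infty)$, using $-y''-qy=0$ and $y(x_1)=0$, the quadratic form becomes
\[
\langle H_q\eta_R,\eta_R\rangle=\int y^2\,|(\chi(\cdot/R))'|^2\dif x=O(R^{-1}),
\]
because $y$ is bounded. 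This only shows the form is small and nonnegative, not negative. To get strict negativity I would perturb near the kink at $x_1$. Since $y'(x_1)\neq0$, the function $\eta_R$ has a corner there, so it is not a solution near $x_1$. Replacing it on $(x_1-\delta,x_1+\delta)$ by a smooth function $g$ matching $\eta_R$ at the endpoints and vanishing to the left decreases the form by an amount of order $\delta\,y'(x_1)^2$, independent of $R$. Then choosing $R$ large gives $\langle H_q\eta,\eta\rangle<0$ for some $\eta\in H^1$. Since the essential spectrum is $[0,\infty)$, this forces a negative eigenvalue.

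The main obstacle is this corner-smoothing estimate: showing that the local modification lowers the energy by a fixed positive amount while the cutoff error is $O(1/R)$. The cleanest alternative I would try first is the standard argument that a zero-energy solution with a zero, which is bounded at $+\infty$, lets one minimize the form on $(x_1,\infty)$ with Dirichlet data. If that proves delicate, I would switch to the ground-state transform. Absence of bound states together with $H_q\ge0$ should give a positive zero-energy solution (for instance the limit of the positive ground states of truncated problems, or a limit as $\lambda\uparrow0$ of the positive decaying solutions at energy $\lambda$). In the exceptional case, the bounded solution space at $+\infty$ is one-dimensional, so this positive solution is a multiple of $y$, and hence $y$ has constant sign.
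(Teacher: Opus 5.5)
Your proof is correct, but it takes a different route from the paper.

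\textbf{The paper's route.} The paper argues through the analytic continuation of $a_q$. On the imaginary axis $a(i\kappa)$ is real. It has no zeros because there are no bound states, and it tends to $1$, so it is positive. Continuity of $a_q$ at the origin in the exceptional case then gives $a_q(0)\geq0$. Finally, $a_q(0)=(c+c^{-1})/2\neq0$ from Lemma \ref{lem:kexpansion} forces $c>0$. This is three lines, but it rests on the holomorphic extension of $a$, the identification of its zeros in $\C_+$ with bound states, and continuity of $a=M/(2ik)$ at $k=0$ when approached along the imaginary axis.

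\textbf{Your route.} Your argument is purely real-variable and proves something stronger: $f_+(\cdot,0)$ has no zeros. The proof of Theorem \ref{thm:image} establishes this fact separately afterwards, but only for $c=1$, using two consecutive zeros. Your half-line test function $y\,\mathbf 1_{(x_1,\infty)}$ also covers the single-zero configuration that would occur if $c<0$, where the two-zero argument is unavailable. The price is the cutoff at infinity, since $y\notin L^2$, together with min--max and $\sigma_{\mathrm{ess}}(H_q)=[0,\infty)$.

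\textbf{The step you flagged.} The corner estimate closes quickly by the first-variation computation the paper uses in Theorem \ref{thm:image}. Take $\phi\in C_c^\infty$ supported where $\chi(\cdot/R)\equiv1$. Integration by parts gives $\int(\eta_R'\phi'-q\eta_R\phi)\dif x=-y'(x_1)\phi(x_1)$, so
\[
\mathcal Q_q[\eta_R+t\phi]=O(R^{-1})-2t\,y'(x_1)\phi(x_1)+t^2\mathcal Q_q[\phi].
\]
Choose $\phi(x_1)\neq0$ and a fixed small $t$ of the appropriate sign. Then this is negative once $R$ is large, giving a negative eigenvalue. The alternatives you list at the end are therefore unnecessary.
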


\begin{proof}
For real $q$, the analytic transmission denominator satisfies
\[
a(-\overline z)=\overline{a(z)},\qquad z\in\C_+,
\]
so $a(i\kappa)$ is real for $\kappa>0$. Its zeros in $\C_+$ are precisely the bound states. Hence $a(i\kappa)$ never vanishes, and since $a(i\kappa)\to1$ as $\kappa\to\infty$, one has $a(i\kappa)>0$ for every $\kappa>0$. Exceptionality gives continuity at the origin, so $a_q(0)\geq0$, and Lemma \ref{lem:kexpansion} gives $a_q(0)=(c+c^{-1})/2\neq0$. Hence $a_q(0)>0$ and therefore $c>0$.
\end{proof}

\begin{theorem}[Threshold scattering formula]\label{thm:threshold}
Let $w\in\SwRreal$, set
\[
q=w'-w^2,
\qquad
I(w)=\int_\R w(x)\dif x.
\]
Then $H_q\geq0$, the potential is exceptional, and
\[
f_+(x,0)=\exp\!\left(\int_x^\infty w(t)\dif t\right),
\qquad
f_-(x,0)=\exp\!\left(-\int_{-\infty}^x w(t)\dif t\right).
\]
Consequently
\begin{equation}\label{eq:threshold-mass}
\begin{aligned}
a_q(0)&=\cosh I(w), & b_q(0)&=-\sinh I(w),\\
T_q(0)&=\operatorname{sech} I(w), & R_q(0)&=-\tanh I(w).
\end{aligned}
\end{equation}
\end{theorem}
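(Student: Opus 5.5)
The plan is to write down the two zero-energy Jost solutions explicitly and then read off $c(q)$, after which Lemma \ref{lem:kexpansion} yields every formula in \eqref{eq:threshold-mass}.

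\emph{Step 1: positivity.} Nonnegativity $H_q\geq0$ follows from the factorization $H_q=A^*A$ with $A=\partial_x+w$, exactly as in the proof of Proposition \ref{prop:injective}. For $y\in H^2(\R)$ one has $\langle H_qy,y\rangle=\lVert y'+wy\rVert^2\geq0$. The only point to check is the expansion: $(-\partial_x+w)(\partial_x+w)y=-y''-w'y+w^2y=-y''-qy$, and this holds since $q=w'-w^2$.

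\emph{Step 2: identify the zero-energy Jost solutions.} Put $g_+(x)=\exp(\int_x^\infty w)$. Then $g_+'=-wg_+$, so $Ag_+=0$, hence $H_qg_+=0$. Also $g_+(x)\to1$ as $x\to+\infty$, because $w$ is integrable. Uniqueness of the zero-energy Jost solution follows from the Volterra equation \eqref{eq:volt0}. A solution of $-y''-qy=0$ with $y\to1$ and $y'\to0$ as $x\to+\infty$ (and here $g_+'=-wg_+$ decays rapidly) must coincide with $f_+(\cdot,0)$. Indeed, integrating $y''=-qy$ twice from $+\infty$ reproduces \eqref{eq:volt0}, whose solution is unique by the Volterra series bound in Lemma \ref{lem:zerodecay}. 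Hence $f_+(\cdot,0)=g_+$. The same argument applied to $g_-(x)=\exp(-\int_{-\infty}^xw)$ gives $f_-(\cdot,0)=g_-$. I expect this identification to be the main, though mild, obstacle. Care is needed only to match the normalization $f_\pm(x,0)\to1$ used in Section \ref{sec:jost} with the $k\to0$ limit of $f_\pm(x,k)$. Lemma \ref{lem:regularity} gives continuity of $m_\pm$ up to $k=0$, and letting $k\to0$ in the Volterra equation for $m_+$ gives \eqref{eq:volt0}, since $D_k(s)\to s$.

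\emph{Step 3: exceptionality and the ratio $c$.} Compute
\[
g_-(x)=\exp\!\Bigl(-\int_{-\infty}^xw\Bigr)=\exp\!\Bigl(-I(w)+\int_x^\infty w\Bigr)=e^{-I(w)}g_+(x).
\]
So the two Jost solutions are linearly dependent, which means $q$ is exceptional with $c(q)=e^{-I(w)}$. Lemma \ref{lem:kexpansion} then gives
\[
a_q(0)=\frac{e^{-I}+e^{I}}2=\cosh I,\qquad b_q(0)=\frac{e^{-I}-e^{I}}2=-\sinh I.
\]
Since $T_q(0)=1/a_q(0)$ and $R_q(0)=b_q(0)/a_q(0)$, these give $T_q(0)=\operatorname{sech}I$ and $R_q(0)=-\tanh I$. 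No appeal to Lemma \ref{lem:positive-c} is needed, because $c$ comes out positive by construction.
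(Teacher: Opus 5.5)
Your proof is correct and follows the paper's route: the factorization gives $H_q\geq0$, you write the zero-energy Jost solutions explicitly, you read off $c(q)=e^{-I(w)}$, and Lemma \ref{lem:kexpansion} gives the formulas. The only difference is how you identify $f_\pm(\cdot,0)$. You use uniqueness of bounded solutions of the Volterra equation \eqref{eq:volt0}, while the paper notes that any solution independent of $g_\pm$ has the form $g_\pm\int^x g_\pm^{-2}$ and grows linearly, so boundedness and the limit $1$ force $f_\pm(\cdot,0)=g_\pm$; either argument suffices.
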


\begin{proof}
The factorization
\[
H_q=(-\partial_x+w)(\partial_x+w)
\]
gives nonnegativity. Both displayed functions solve $(\partial_x+w)y=0$ and are bounded, the first tending to $1$ as $x\to+\infty$ and the second as $x\to-\infty$. A linearly independent zero-energy solution has the form $y(x)\int^x y(t)^{-2}\dif t$ and grows linearly there. Hence the displayed solutions are the zero-energy Jost solutions. They satisfy
\[
f_-(\cdot,0)=e^{-I(w)}f_+(\cdot,0),
\]
so the potential is exceptional with $c=e^{-I(w)}$. Substitution into Lemma \ref{lem:kexpansion} gives \eqref{eq:threshold-mass}.
\end{proof}

\section{The image of the real Bers map}\label{sec:image}

\begin{theorem}\label{thm:image}
Let $q\in\SwRreal$. The following are equivalent.
\begin{enumerate}
\item $q=\Bers(\varphi)$ for a necessarily unique $\varphi\in\Dg$.
\item $q=w'-w^2$ for a necessarily unique $w\in\SwRzero$.
\item $H_q$ has no bound states and $R_q(0)=0$.
\item $H_q$ has no bound states and $T_q(0)=1$.
\end{enumerate}
When these conditions hold, $f_+(\cdot,0)$ is positive, $f_+(x,0)\to1$ as $x\to\pm\infty$, and
\[
\varphi'(x)=f_+(x,0)^{-2}.
\]
\end{theorem}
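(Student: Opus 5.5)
The plan is to run the cycle (1)$\Leftrightarrow$(2)$\Rightarrow$(3)$\Leftrightarrow$(4)$\Rightarrow$(2). Uniqueness in (1) and (2) is automatic from Proposition \ref{prop:injective} and the bijectivity of $\Theta$ in Lemma \ref{lem:riccati-chart}.

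\emph{(1)$\Leftrightarrow$(2).} This is Lemma \ref{lem:riccati-chart} together with the identity $\Bers(\varphi)=w_\varphi'-w_\varphi^2$.

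\emph{(2)$\Rightarrow$(3).} Theorem \ref{thm:threshold} gives $H_q\ge0$, so there are no bound states. It also gives $R_q(0)=-\tanh I(w)=0$, since $I(w)=0$.

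\emph{(3)$\Leftrightarrow$(4).} Both conditions force $q$ to be exceptional: in the generic case Lemma \ref{lem:regularity} gives $R_q(0)=-1$ and $T_q(0)=0$. Lemma \ref{lem:positive-c} then gives $c>0$. By Lemma \ref{lem:kexpansion}, $R_q(0)=0$ iff $c^2=1$ iff $c=1$, and $T_q(0)=2c/(1+c^2)=1$ iff $(c-1)^2=0$ iff $c=1$.

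\emph{(3)$\Rightarrow$(2).} This is the main step. By the previous paragraph, $q$ is exceptional with $f_-(\cdot,0)=f_+(\cdot,0)=:y$. The key claim is that $y$ has no zeros. I would argue by Sturm oscillation at zero energy. If $y$ vanished somewhere, then $H_q$ would have a negative eigenvalue. Concretely, if $y(x_1)=0$, the truncation $y\mathbf 1_{[x_1,\infty)}$ (or, with two zeros, the piece of $y$ between consecutive zeros) is an $H^1$ function with quadratic form $\int(|g'|^2-q|g|^2)\le 0$. Bounded zero-energy solutions decay, or tend to constants, so the boundary terms vanish. A small perturbation then gives a negative energy, provided the truncated function is not itself an eigenfunction. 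It cannot be, because it has a kink at $x_1$, where $y'(x_1)\neq0$. Hence $\inf\sigma(H_q)<0$, and since the essential spectrum is $[0,\infty)$, this contradicts the absence of bound states. This step needs care with the fact that $y\notin L^2$: it tends to $1$ at both ends. I would handle that by using the compactly supported cutoff only on the finite side, or, equivalently, by invoking the classical statement that the number of zeros of the zero-energy Jost solution equals the number of bound states (Deift--Trubowitz). I expect the first route to be the most delicate point.

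Granting $y>0$, with $y\to1$ at $\pm\infty$ by Lemma \ref{lem:zerodecay} applied on both half-lines (using $f_-=f_+$), set $w=-y'/y$. Lemma \ref{lem:zerodecay} and $\inf y>0$ give $w\in\SwRreal$. A direct computation gives $w'-w^2=-y''/y=q$. Finally,
\[
\int_\R w\,\dif x=\log y(-\infty)-\log y(+\infty)=0,
\]
so $w\in\SwRzero$.

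\emph{The formula for $\varphi'$.} For $\varphi=\Theta^{-1}(w)$, Proposition \ref{prop:injective} gives $\psi=(\varphi')^{-1/2}=e^{-u/2}$ with $u'/2=w$. Thus $\psi$ is the solution of $(\partial_x+w)\psi=0$ with $\psi\to1$ at $+\infty$. By Theorem \ref{thm:threshold} (with $I(w)=0$) this $\psi$ is $f_+(\cdot,0)$, and therefore $\varphi'=f_+(\cdot,0)^{-2}$. Positivity of $f_+(\cdot,0)$ and its limits at $\pm\infty$ follow from the same formula.
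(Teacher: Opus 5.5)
Your proposal is correct and follows the paper's proof essentially step for step. Exceptionality comes from the generic values $R_q(0)=-1$, $T_q(0)=0$. Then $c=1$ follows from Lemmas \ref{lem:kexpansion} and \ref{lem:positive-c}. Finally, positivity of $y=f_\pm(\cdot,0)$ follows by testing the nonnegative form $\mathcal Q_q$ on the piece of $y$ between consecutive zeros, since the kink of that piece is incompatible with its being a minimizer.

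The $L^2$ worry you raise does not arise. Because $y\to1$ at both ends and its zeros are simple, any zero forces at least two, so the compactly supported piece is always available. This is exactly how the paper proceeds. Your $w=-y'/y$ is half the derivative of the paper's $u=-2\log y$.
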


\begin{proof}
The equivalence of (1) and (2) is Lemma \ref{lem:riccati-chart}. If (2) holds, Theorem \ref{thm:threshold} gives $R_q(0)=0$ and $T_q(0)=1$, while the factorization of $H_q$ excludes bound states. Thus (2) implies (3) and (4).

Assume (3). The generic zero-energy case has $R_q(0)=-1$ by Lemma \ref{lem:regularity}, so $q$ is exceptional. Write
\[
f_-(\cdot,0)=c\,f_+(\cdot,0).
\]
Lemma \ref{lem:kexpansion} gives $c=\pm1$, and Lemma \ref{lem:positive-c} gives $c>0$. Hence $c=1$. The same conclusion follows from (4), since Lemma \ref{lem:kexpansion} and Lemma \ref{lem:positive-c} show that $T_q(0)=1$ is equivalent to $c=1$.

It remains to reconstruct the diffeomorphism from $c=1$. Set $y=f_+(\cdot,0)=f_-(\cdot,0)$. Then $y(x)\to1$ as $x\to\pm\infty$. We claim that $y$ has no zero. Every zero is simple, because a solution of a second-order equation that vanishes together with its derivative is identically zero. Since the limits at $+\infty$ and at $-\infty$ are positive, any zero forces at least two zeros. Choose consecutive zeros $a<b$ and define $h=y$ on $[a,b]$ and $h=0$ outside. Then $h\in H^1(\R)$ and
\[
\mathcal Q_q[h]=\int_\R\bigl(|h'|^2-qh^2\bigr)\dif x=0.
\]
The form $\mathcal Q_q$ is nonnegative. Therefore, for every $g\in C_c^\infty(\R)$, the linear term in $\mathcal Q_q[h+t g]$ vanishes. Integration by parts gives
\[
0=\int_\R(h'g'-qh g)\dif x
  =y'(b)g(b)-y'(a)g(a).
\]
The values $g(a)$ and $g(b)$ may be chosen independently, so $y'(a)=y'(b)=0$, contradicting simplicity. Hence $y>0$.

By Lemma \ref{lem:zerodecay}, $y-1$ and all derivatives of $y$ decay rapidly as $x\to\pm\infty$. Since $y$ is positive and tends to one, it is bounded above and away from zero. Thus
\[
u=-2\log y\in\SwRreal.
\]
Define
\[
\varphi(x)=x+\int_{-\infty}^x\bigl(y(t)^{-2}-1\bigr)\dif t.
\]
Then $\varphi\in\Dg$, $\varphi'=e^u=y^{-2}$, and the equation $y''+qy=0$ gives
\[
\Bers(\varphi)=\frac12u''-\frac14(u')^2=-\frac{y''}{y}=q.
\]
This proves (1). Uniqueness follows from Proposition \ref{prop:injective}.
\end{proof}

\section{Reflection coordinates}\label{sec:chart}

Recall $\RRset=\bigl\{R\in\SwR\ \big|\ \overline{R(-k)}=R(k),\ \lVert R\rVert_\infty<1,\ R(0)=0\bigr\}$.

\begin{corollary}\label{cor:reflection}
The scattering map
\[
\sla\colon\Dg\longrightarrow\RRset,
\qquad
\sla(\varphi)=R_{\Bers(\varphi)},
\]
is a bijection.
\end{corollary}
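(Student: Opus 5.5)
The plan is to factor $\sla$ as the composition of $\Bers\colon\Dg\to\Bers(\Dg)$ with the restriction of the scattering map $q\mapsto R_q$ to $\Bers(\Dg)$. Proposition \ref{prop:injective} shows the first factor is a bijection onto its image. Theorem \ref{thm:image} identifies that image as the set of $q\in\SwRreal$ with no bound states and $R_q(0)=0$. It then suffices to show that $q\mapsto R_q$ is a bijection from this set onto $\RRset$.

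\emph{Well-definedness.} Take $\varphi\in\Dg$ and put $q=\Bers(\varphi)$.
\begin{itemize}
\item By Theorem \ref{thm:threshold}, $q$ is exceptional, so Lemma \ref{lem:regularity} makes $a$ and $b$ smooth at zero.
\item Away from zero, $a$ and $b$ are smooth on the real axis. The function $b$ is rapidly decreasing with all its derivatives. The function $a$ tends to $1$ with symbol-type bounds on its derivatives.
\item The function $a$ does not vanish on $\R$. For $k\neq0$ this follows from $|a|^2=1+|b|^2\geq1$. At $k=0$, Lemma \ref{lem:kexpansion} gives $a_q(0)=1$.
\item Hence $1/a$ is smooth with bounded derivatives, so $R_q=b/a\in\SwR$.
\item The symmetry $\overline{R(-k)}=R(k)$ follows from $a(-k)=\overline{a(k)}$ and $b(-k)=\overline{b(k)}$.
\item The bound $|R_q(k)|<1$ holds for each $k$ because $|a|^2=1+|b|^2$ and $a(0)=1$. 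Since $R_q$ is continuous and vanishes at infinity, the supremum is attained or approached only at finite points, so $\lVert R_q\rVert_\infty<1$.
\item Finally $R_q(0)=0$ by Theorem \ref{thm:image}.
\end{itemize}

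\emph{Injectivity.} For a potential without bound states the norming constants are absent. The classical Faddeev--Marchenko--Deift--Trubowitz inverse theory then says that $q$ is determined by $R_q$ alone, through the Marchenko equation built from $R_q$. So if $R_{\Bers(\varphi_1)}=R_{\Bers(\varphi_2)}$, the two potentials coincide, and injectivity of $\Bers$ gives $\varphi_1=\varphi_2$.

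\emph{Surjectivity.} Given $R\in\RRset$, I would invoke the exceptional inverse-scattering bijection of Frayer--Hryniv--Mykytyuk--Perry \cite[Theorem 1.1 and Corollary 1.2]{FHMP09}. I expect this to be the main obstacle, since the paper does not develop the inverse theory itself. The steps are:
\begin{itemize}
\item Build $a$ as the outer function in $\C_+$ with $|a|^{-2}=1-|R|^2$ and no zeros, so there are no bound states.
\item Solve the Marchenko equation, which is uniquely solvable because $\lVert R\rVert_\infty<1$, and recover a real potential $q$.
\item Show that Schwartz decay of $R$ together with smoothness at $k=0$ yields $q\in\SwRreal$. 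I would appeal to \cite{FHMP09,HMP10} for the unique Riccati representative $w$ of $q$. Schwartz regularity of $w$ follows from Lemma \ref{lem:zerodecay} via $w=-y'/y$, provided $y=f_+(\cdot,0)$ is positive.
\item The condition $|R(0)|<1$ forces $q$ to be exceptional; if $q$ were generic, Lemma \ref{lem:regularity} would give $R(0)=-1$.
\item Theorem \ref{thm:image}, using no bound states and $R_q(0)=R(0)=0$, then places $q$ in $\Bers(\Dg)$.
\end{itemize}
The delicate point is transferring Schwartz regularity from $R$ to $q$ in the exceptional case. There I would rely on the cited mapping theorems: the weighted Sobolev bijections in \cite{HMP10}, intersected over all weights and orders, should give exactly $\SwR$.
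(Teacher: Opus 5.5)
\textbf{Verdict: genuine gap, in surjectivity.} Your well-definedness and injectivity arguments are correct and follow the paper. Your well-definedness is in fact more explicit: you check $a_q(0)=1$, so $1/a$ is smooth with bounded derivatives and $R_q=b/a\in\SwR$.

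The gap sits exactly at the step you flag as delicate. The bullet that obtains Schwartz regularity of $w$ from Lemma \ref{lem:zerodecay} via $w=-y'/y$ is circular. That lemma assumes $q\in\SwRreal$, which is what you are trying to prove. Once $q\in\SwRreal$ is known, Theorem \ref{thm:image} finishes on its own, so this bullet contributes nothing to the regularity transfer.

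What remains is your claim that the weighted Sobolev bijections, ``intersected over all weights and orders,'' give $\SwR$. As written this is not yet an argument. Each bijection gives you a preimage of $R$ in its own space. You cannot intersect until you know these preimages are one and the same function, and for that you need uniqueness in a single ambient class containing all of them.

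The paper supplies this step, and it works with the Riccati representative rather than with $q$. The reflection coefficient of the ZS-AKNS system with potential $v$ is the Schr\"odinger reflection coefficient of $V=v'+v^2$. After rescaling to $\widetilde R(s)=R(s/2)$, \cite{FHMP09} gives a \emph{unique} real Riccati representative $v\in L^1(\R)\cap L^2(\R)$ of a no-bound-state potential with reflection coefficient $\widetilde R$.

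\begin{enumerate}
\item For each $j\geq0$ and $k\geq1$, Zhou's theorem \cite{Zhou98} gives a real $v_{j,k}\in H^{j,k}(\R)$ with the same reflection coefficient.
\item Cauchy--Schwarz gives $H^{j,k}(\R)\subset L^1(\R)\cap L^2(\R)$ for $k\geq1$, so uniqueness forces $v_{j,k}=v$.
\item Hence $v\in\bigcap_{j,k}H^{j,k}(\R)=\SwR$.
\item Then $w=-v\in\SwRreal$, so $q=w'-w^2$ is automatically Schwartz.
\item Finally, $R(0)=0$ together with Theorem \ref{thm:threshold} gives $\int_\R w\dif x=0$.
\end{enumerate}

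With this repair, your closing appeal to Theorem \ref{thm:image} is equivalent to the paper's use of Lemma \ref{lem:riccati-chart}. Your Marchenko and outer-function construction also becomes unnecessary, since existence comes from \cite{FHMP09}.
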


\begin{proof}
Let $\varphi\in\Dg$, put $w=\tfrac12(\log\varphi')'$, and set $q=\Bers(\varphi)=w'-w^2$. The direct scattering estimates of Section \ref{sec:jost} give $R_q\in\SwR$ and the reality symmetry. For $k\neq0$, the identity $|T_q(k)|^2+|R_q(k)|^2=1$ and $T_q(k)\neq0$ give $|R_q(k)|<1$, while Theorem \ref{thm:threshold} gives $R_q(0)=0$. Since $R_q$ is continuous and tends to zero at infinity, $\lVert R_q\rVert_\infty<1$. If two elements of $\Dg$ have the same reflection coefficient, then by Theorem \ref{thm:image} both potentials are exceptional without bound states, so the scattering data consist of the reflection coefficient alone and the Marchenko theory on the line \cite[Chapter 3]{Mar86}, \cite{DT79} gives the same potential; Proposition \ref{prop:injective} then gives the same diffeomorphism.

For surjectivity, let $R\in\RRset$ and set $\widetilde R(s)=R(s/2)$. In the ZS-AKNS normalization of \cite{FHMP09}, the spectral variables are related by $s=2k$. Their right reflection coefficient is
\[
r_+(s)=-\frac{\overline{b_{\rm AKNS}(s)}}{a_{\rm AKNS}(s)}.
\]
The asymptotics in \cite[(1.9) and (1.10)]{FHMP09} show that $\widetilde R=r_+$ for the Schr\"odinger operator $H_q=-\partial_x^2-q$. Since $\widetilde R$ is Schwartz and satisfies $\overline{\widetilde R(-s)}=\widetilde R(s)$, its inverse Fourier transform is real and belongs to $L^1(\R)\cap L^2(\R)$; moreover $\lVert\widetilde R\rVert_\infty<1$. Theorem 1.1 and Corollary 1.2 of \cite{FHMP09} therefore give a unique real Riccati representative $v\in L^1(\R)\cap L^2(\R)$ for a no-bound-state potential
\[
V=v'+v^2
\]
in the standard convention $-\partial_x^2+V$. In this reduction $v$ is the potential of the associated ZS-AKNS system and $\widetilde R$ is its reflection coefficient, so the weighted Sobolev theory applies to $v$ directly. Let $H^{j,k}(\R)$ consist of the $f\in L^2(\R)$ with $f^{(j)}\in L^2(\R)$ and $(1+|x|)^{k}f\in L^2(\R)$. For $j\geq0$ and $k\geq1$ the ZS-AKNS scattering map is a bijection from real $H^{j,k}(\R)$ onto the reflection coefficients of class $H^{k,j}(\R)$ of supremum norm less than one \cite{Zhou98}. Since $\widetilde R$ is Schwartz it lies in $H^{k,j}(\R)$ for every such pair, so each pair produces a real $v_{j,k}\in H^{j,k}(\R)$ with reflection coefficient $\widetilde R$. The weight $k\geq1$ and Cauchy\textendash Schwarz give $H^{j,k}(\R)\subset L^1(\R)\cap L^2(\R)$, so the injectivity in \cite[Theorem 1.1]{FHMP09} forces $v_{j,k}=v$ for every $j$ and $k$. Since $\bigcap_{j,k}H^{j,k}(\R)=\SwR$, we conclude $v\in\SwRreal$. The exceptional mapping property on the Schr\"odinger side, obtained from the same techniques, is \cite[Theorem 1.1]{HMP10}.

Put $w=-v$ and $q=-V=w'-w^2$. Then $H_q=-\partial_x^2+V$ has right reflection coefficient $R$. Theorem \ref{thm:threshold} and $R_q(0)=R(0)=0$ give
\[
\int_\R w\dif x=0.
\]
Thus $w\in\SwRzero$, and Lemma \ref{lem:riccati-chart} produces a unique $\varphi\in\Dg$ with $\Bers(\varphi)=q$. Hence $\sla(\varphi)=R$.
\end{proof}

\section{The differential of the real Bers map}\label{sec:cokernel}

Throughout this section fix $u\in\SwRreal$, let $q=\tfrac12u''-\tfrac14(u')^{2}$, $\psi=e^{-u/2}$, and let
\[
\widetilde\psi(x)=\psi(x)\int_{x_0}^{x}\frac{\dif t}{\psi(t)^{2}},\qquad \Wr(\psi,\widetilde\psi)=1 ,
\]
be the second zero energy solution, which grows linearly as $x\to\pm\infty$, since $\psi^{-2}=e^{u}\to1$. In the chart of Lemma \ref{lem:chart} the Bers map is $u\mapsto\tfrac12u''-\tfrac14(u')^2$ of $\SwRreal$ to itself, with differential
\begin{equation}\label{eq:dbers}
D\Bers_u(\delta u)=\tfrac12\big(\delta u''-u'\,\delta u'\big)
=\tfrac12\,e^{u}\,\partial_x\big(e^{-u}\,\partial_x\,\delta u\big)
=\tfrac12(\partial_x-u')\partial_x\,\delta u .
\end{equation}

\begin{theorem}\label{thm:differential}
\begin{enumerate}
\item For $v\in\SwRreal$ the equation $D\Bers_u(\delta u)=v$ has a solution $\delta u\in\SwRreal$ if and only if
\begin{equation}\label{eq:twoconditions}
\Lambda_1(v)=\int_\R \psi^{2}v\dif x=0
\qquad\text{and}\qquad
\Lambda_2(v)=\int_\R \psi\widetilde\psi\,v\dif x=0 ,
\end{equation}
and the solution is then unique. The functional $\Lambda_2$ does not depend on the base point $x_0$ on the kernel of $\Lambda_1$. Both functionals are continuous and linearly independent on $\Sw$. Hence the image of $D\Bers_u$ is closed and split of codimension two, with annihilator spanned by $\psi^{2}$ and $\psi\widetilde\psi$. On this image the inverse is
\begin{equation}\label{eq:inverse_dbers}
G_uv(x)=2\int_{-\infty}^{x}e^{u(t)}\int_{-\infty}^{t}e^{-u(y)}v(y)\dif y\dif t.
\end{equation}
\item For $v\in\SwRreal$ and the straight line variation $\widehat q_\varepsilon=q+\varepsilon v$,
\[
\frac{\dif}{\dif\varepsilon}\Big|_{0}\Wr\big(\widehat f^{\varepsilon}_-(\cdot,0),\widehat f^{\varepsilon}_+(\cdot,0)\big)=\Lambda_1(v),
\]
so $\Lambda_1$ measures the first order destruction of exceptionality at zero energy. Moreover, for every $v$ with $\Lambda_1(v)=0$ there is an explicit smooth curve $\varepsilon\mapsto q_\varepsilon\in\SwRreal$ of exceptional potentials without bound states, with $q_0=q$ and $\dot q_0=v$, along which
\[
c(q_\varepsilon)=1+\varepsilon\,\Lambda_2(v)+O(\varepsilon^{2}),
\qquad\text{hence}\qquad
R_{q_\varepsilon}(0)=\varepsilon\,\Lambda_2(v)+O(\varepsilon^{2}) ,
\]
so $\Lambda_2$ measures the first order motion of $R_q(0)$ away from zero.
\item For every integer $k\geq0$, let
\[
L_{u,k}=D\Bers_u\colon W^{k+2,1}(\R)\longrightarrow W^{k,1}(\R).
\]
Then
\[
\overline{\operatorname{Ran}L_{u,k}}=\ker\Lambda_1,
\]
but $\operatorname{Ran}L_{u,k}$ is not closed. Hence the Sobolev realization is not split, and the inverse of $L_{u,k}$ on its range is unbounded.
\end{enumerate}
\end{theorem}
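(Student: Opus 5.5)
Part (1) comes from integrating the factorized form \eqref{eq:dbers} twice. The equation $\tfrac12 e^{u}\partial_x(e^{-u}\partial_x\delta u)=v$ says $\partial_x(e^{-u}\delta u')=2e^{-u}v=2\psi^2 v$. For $\delta u$ Schwartz, $e^{-u}\delta u'$ is Schwartz, so it must equal $2\int_{-\infty}^x\psi^2v$, and this tends to $0$ at $+\infty$ exactly when $\Lambda_1(v)=0$. Then $\delta u(x)=\int_{-\infty}^x e^{u}\cdot 2\int_{-\infty}^t\psi^2 v$, which is \eqref{eq:inverse_dbers}; uniqueness holds because $\delta u$ is determined by its decay at $-\infty$. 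For $\delta u$ to decay at $+\infty$ we need $\int_\R e^{u(t)}\int_{-\infty}^t\psi^2v\,dy\,dt=0$. I will show this equals $-\Lambda_2(v)$ up to constants by Fubini: swapping the order gives $\int\psi^2(y)v(y)\bigl(\int_y^\infty e^{u}\bigr)dy$. The inner integral diverges, so I will instead write $\int_y^{X}e^{u}=\int_{x_0}^X\psi^{-2}-\int_{x_0}^y\psi^{-2}$. Under $\Lambda_1=0$ the $X$-dependent term drops, leaving $-\int\psi^2 v\int_{x_0}^y\psi^{-2}=-\Lambda_2(v)$. The same computation shows independence from $x_0$ on $\ker\Lambda_1$. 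To make this rigorous I will truncate at $X$ and let $X\to\infty$, using the rapid decay of $\int_{-\infty}^t\psi^2v$ once $\Lambda_1(v)=0$. Rapid decay of $G_uv$ and its derivatives then follows from the Schwartz tails, as in Lemma~\ref{lem:riccati-chart}. For continuity, note $\psi\widetilde\psi=O(1+|x|)$. For linear independence, take $v$ supported far to the right, where $\widetilde\psi\approx x$, compared with $v$ supported far to the left. A finite-codimension kernel of continuous functionals is closed and complemented, which gives closedness and splitting.

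Part (2) uses the zero-energy Volterra equations \eqref{eq:volt0}. Differentiating them in $\varepsilon$ shows $\dot f_+$ solves $-\dot f''-q\dot f=vf_+$ with $\dot f_+\to0$ at $+\infty$, and similarly for $\dot f_-$. Then $\frac{d}{dx}W(f_-,\dot f_+)=-vf_-f_+$, and with $f_\pm=\psi$ (here $c=1$ by Theorem~\ref{thm:threshold}) the variation of the Wronskian reduces to $\pm\int v\psi^2$; I will fix the sign by direct computation. For the second claim, I will not differentiate $c$ along the straight line, which leaves the exceptional set. Instead I will build an explicit curve $w_\varepsilon=w+\varepsilon\dot w$ in $\SwRreal$, not necessarily of zero mean, with $\dot w'-2w\dot w=v$. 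This is the Riccati linearization $(\partial_x-2w)\dot w=v$, so $\dot w=\psi^{-2}\int_{-\infty}^x\psi^2v$, which is Schwartz precisely when $\Lambda_1(v)=0$. Then $q_\varepsilon=w_\varepsilon'-w_\varepsilon^2$ is exceptional with no bound states by Theorem~\ref{thm:threshold}, and $c(q_\varepsilon)=e^{-\varepsilon I(\dot w)}$. It remains to identify $I(\dot w)$ with $-\Lambda_2(v)$, which is the same Fubini computation as in part (1), and then $R_{q_\varepsilon}(0)=(c^2-1)/(c^2+1)=-\tanh(\varepsilon I(\dot w))$. Signs need care, but the structure is clear.

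Part (3) is the main obstacle. The explicit formula still shows that $\operatorname{Ran}L_{u,k}\subset\ker\Lambda_1$, since $\Lambda_1$ is bounded on $W^{k,1}$ ($\psi^2$ is bounded) and $\Lambda_1\circ D\Bers_u=0$. The range contains $\ker\Lambda_1\cap\ker\Lambda_2\cap\SwRreal$. Because $\Lambda_2$ is unbounded on $W^{k,1}$, choose Schwartz $v_n\in\ker\Lambda_1$ with $\|v_n\|_{W^{k,1}}\to0$ and $\Lambda_2(v_n)=1$, for example bumps translated far out and scaled down. Correcting any $v\in\ker\Lambda_1\cap\Sw$ by $\Lambda_2(v)v_n$ approximates it inside the range, and density of Schwartz functions in $\ker\Lambda_1$ then gives closure equal to $\ker\Lambda_1$. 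For non-closedness, I will exhibit a $v\in W^{k,1}\cap\ker\Lambda_1$ with $\int|x||v|=\infty$ for which $G_uv$ fails to be in $L^1$ or fails to tend to $0$. By the open mapping theorem, a closed range would force the range to be all of $\ker\Lambda_1$ with a bounded inverse. Alternatively, if the range were closed it would equal $\ker\Lambda_1$, and I will contradict that with a $v$ whose $\int_{-\infty}^x\psi^2v$ decays like $1/x$, so that $\delta u'\notin L^1$. Care is needed to check that every preimage in $W^{k+2,1}$ coincides with $G_uv$, which holds because $W^{k+2,1}$ functions and their derivatives vanish at infinity.
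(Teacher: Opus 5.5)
Your proposal is correct. Part (1) and the Wronskian half of (2) follow the paper. Truncating at $X$ in the Fubini step is an equivalent substitute for the paper's split at $x_0$. The sign you left open is positive: $\Wr(f_-,\delta f_+)+\Wr(\delta f_-,f_+)=\int_x^\infty v\psi^2+\int_{-\infty}^x v\psi^2=+\Lambda_1(v)$. The genuine differences are the curve in (2) and the non-closedness witness in (3).

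For the curve, the paper perturbs the zero-energy solution: $y_\varepsilon=\psi+\varepsilon h$ with $h''+qh=-v\psi$ built from the Green kernel, and $q_\varepsilon=-y_\varepsilon''/y_\varepsilon$. It must then prove $H_{q_\varepsilon}\ge0$ by a ground-state identity and identify $y_\varepsilon$ with $f_+^\varepsilon(\cdot,0)$ to read off $c(q_\varepsilon)$, and it needs $\varepsilon$ small. You instead perturb the Riccati representative linearly, $w_\varepsilon=w+\varepsilon\dot w$ with $\dot w=\psi^{-2}\int_{-\infty}^x\psi^2v=\tfrac12(G_uv)'$. Then exceptionality, absence of bound states and $c(q_\varepsilon)=e^{-\varepsilon I(\dot w)}=e^{\varepsilon\Lambda_2(v)}$ all come directly from Theorem \ref{thm:threshold}. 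Your version holds for every real $\varepsilon$, gives the exact formula $R_{q_\varepsilon}(0)=\tanh(\varepsilon\Lambda_2(v))$, and ties $\Lambda_2$ visibly to the obstruction $G_uv(+\infty)$ in (1). The two curves agree to first order, because $\Wr(\psi,h)=-\int_{-\infty}^x\psi^2v$ gives $-y_\varepsilon'/y_\varepsilon=w+\varepsilon\dot w+O(\varepsilon^2)$. The paper's curve has the feature that $q_\varepsilon-q=\varepsilon v\psi/y_\varepsilon$ is pointwise proportional to $v$.

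For non-closedness, the paper takes a Schwartz $v_0\in\ker\Lambda_1$ with $\Lambda_2(v_0)\neq0$. Its only candidate preimage has $\int\delta u'=-2\Lambda_2(v_0)\neq0$, so it cannot vanish at both ends. You use slow decay instead. For instance, $v=e^{u}F'$ with $F$ smooth, $F=0$ on $(-\infty,0]$ and $F=1/x$ on $[2,\infty)$, lies in $W^{k,1}\cap\ker\Lambda_1$ for every $k$ but forces $\delta u'=2e^{u}F\notin L^1$. Both witnesses suffice. The paper's is more informative: it shows that even Schwartz data in $\ker\Lambda_1$ are missed, exactly because of $\Lambda_2$, whereas for your $v$ the functional $\Lambda_2$ is not even defined.

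Two small repairs are needed. Translated bumps are not in $\ker\Lambda_1$ as they stand, so subtract $\Lambda_1(\cdot)\sigma$ as the paper does before normalizing $\Lambda_2=1$. You should also state explicitly that non-splitness and unboundedness of the inverse follow from non-closedness.
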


\begin{proof}
We prove (1). Write $v=D\Bers_u(\delta u)$. Since $e^{-u}(\delta u''-u'\delta u')=(e^{-u}\delta u')'$ and $\psi^{2}=e^{-u}$,
\[
\big(\psi^{2}\,\delta u'\big)'=2\psi^{2}v .
\]
If $\delta u\in\Sw$, integrating over the line gives $\Lambda_1(v)=0$, and then
\begin{equation}\label{eq:solveg}
\delta u'(x)=2\,\psi(x)^{-2}\int_{-\infty}^{x}\psi^{2}v\dif t=-2\,\psi(x)^{-2}\int_{x}^{\infty}\psi^{2}v\dif t ,
\end{equation}
which lies in $\Sw$, the first expression decaying rapidly as $x\to-\infty$ and the second as $x\to+\infty$. Finally $\delta u=\int_{-\infty}^{x}\delta u'$ lies in $\Sw$ exactly when $\int_\R\delta u'=0$. To compute this integral, split at the base point $x_0$ of $\widetilde\psi$, use the first expression in \eqref{eq:solveg} on $(-\infty,x_0)$ and the second on $(x_0,\infty)$, and apply Fubini on the regions $t<x<x_0$ and $x_0<x<t$; the double integrals converge absolutely because $\int(1+|t|)\psi^{2}|v|<\infty$ for $v\in\Sw$ while $\int_{x_0}^{t}\psi^{-2}=O(|t|)$. This yields
\[
\int_\R\delta u'\dif x
=2\int_\R \psi(t)^{2}v(t)\Big(-\int_{x_0}^{t}\psi(x)^{-2}\dif x\Big)\dif t
=-2\int_\R v\,\psi\,\widetilde\psi\dif t=-2\Lambda_2(v).
\]
On the kernel of $\Lambda_1$, a change of base point shifts $\widetilde\psi$ by a multiple of $\psi$ and hence $\Lambda_2$ by a multiple of $\Lambda_1(v)=0$. Thus $\delta u\in\Sw$ exactly when \eqref{eq:twoconditions} holds, uniqueness being built into \eqref{eq:solveg} together with the normalization $\delta u(-\infty)=0$. Continuity follows because $\psi^{2}$ is bounded and $\psi\widetilde\psi=O(|x|)$. The function $\widetilde\psi/\psi$ is strictly increasing, so the two functionals are linearly independent. Choose $\chi_1,\chi_2\in\SwRreal$ so that the matrix $\bigl(\Lambda_i(\chi_j)\bigr)_{i,j=1}^{2}$ is invertible. Subtracting the corresponding linear combination of $\chi_1$ and $\chi_2$ gives a continuous projection onto the common kernel, which proves that the range is split. Since $\psi^{2}=e^{-u}$, the first expression in \eqref{eq:solveg} reads $\delta u'(t)=2e^{u(t)}\int_{-\infty}^{t}e^{-u(y)}v(y)\dif y$, and integrating it from $-\infty$ with $\delta u(-\infty)=0$ gives \eqref{eq:inverse_dbers}. The same estimates show that $G_u$ is continuous from the indicated closed subspace of $\Sw$ to $\Sw$.

We prove (2). Let
\[
\delta f_\pm=\frac{\dif}{\dif\varepsilon}\Big|_0
\widehat f^{\varepsilon}_\pm(\cdot,0).
\]
Differentiation of the Volterra equations in $\varepsilon$ gives
\[
\delta f_\pm''+q\,\delta f_\pm=-v\,f_\pm(\cdot,0),
\]
with $\delta f_+$ and $\delta f_+'$ tending to zero at $+\infty$, and $\delta f_-$ and $\delta f_-'$ at $-\infty$. Put $y_\pm=f_\pm(\cdot,0)$, so $y_-=y_+=\psi$. Since
\[
\bigl(\Wr(g,h)\bigr)'=g h''-g''h,
\]
we obtain
\[
\Wr(y_-,\delta f_+)(x)=\int_x^\infty v\,y_-y_+\dif t,
\qquad
\Wr(\delta f_-,y_+)(x)=\int_{-\infty}^x v\,y_-y_+\dif t.
\]
Adding,
\[
\frac{\dif}{\dif\varepsilon}\Big|_0\Wr(\widehat f_-^{\varepsilon},\widehat f_+^{\varepsilon})
=\int_{-\infty}^{x}v\,\psi^{2}\dif t+\int_{x}^{\infty}v\,\psi^{2}\dif t
=\Lambda_1(v).
\]

For the curve construction let $v\in\SwRreal$ with $\Lambda_1(v)=0$ and define, with the Green kernel $G(x,t)=\psi(x)\widetilde\psi(t)-\widetilde\psi(x)\psi(t)$ of the zero energy operator, for which $y(x)=\int_x^\infty G(x,t)g(t)\dif t$ solves $y''+qy=g$ and vanishes with its derivative at $+\infty$,
\[
h(x)=-\psi(x)\int_{x}^{\infty}\widetilde\psi\,\psi\,v\dif t+\widetilde\psi(x)\int_{x}^{\infty}\psi^{2}\,v\dif t ,
\]
the solution of $h''+q\,h=-v\,\psi$ with $h,h'\to0$ at $+\infty$; the integrals converge absolutely. As $x\to-\infty$ the first term tends to $-\Lambda_2(v)$, since $\psi(x)\to1$ as $x\to\pm\infty$, while in the second term $\int_x^\infty\psi^2 v=\Lambda_1(v)-\int_{-\infty}^x\psi^{2}v=O(|x|^{-\infty})$ kills the linear growth of $\widetilde\psi$; moreover $h$ approaches its limits rapidly with all derivatives, as one sees by iterating $h''=-qh-v\psi$. Now set
\[
y_\varepsilon=\psi+\varepsilon h,\qquad
q_\varepsilon=-\frac{y_\varepsilon''}{y_\varepsilon}=q+\varepsilon\,v\,\frac{\psi}{y_\varepsilon},
\]
the second equality because $y_\varepsilon''=\psi''+\varepsilon h''=-q\psi+\varepsilon(-qh-v\psi)=-q\,y_\varepsilon-\varepsilon v\psi$. For small $\varepsilon$, we have $y_\varepsilon>0$, since $\psi\geq m>0$ and $h$ is bounded, and $q_\varepsilon\in\SwRreal$ with $q_0=q$, $\dot q_0=v$, the curve being smooth in $\varepsilon$. By construction, $y_\varepsilon$ is a positive zero-energy solution of $H_{q_\varepsilon}y_\varepsilon=0$, and the substitution $f=y_\varepsilon g$ gives the identity
\[
\int_\R\big((f')^{2}-q_\varepsilon f^{2}\big)\dif x=\int_\R y_\varepsilon^{2}\,\big|(f/y_\varepsilon)'\big|^{2}\dif x\;\geq\;0
\qquad (f\in C_c^{\infty}),
\]
so $H_{q_\varepsilon}\geq0$ and there are no bound states. Since $y_\varepsilon\to1$ at $+\infty$ and $y_\varepsilon\to1-\varepsilon\Lambda_2(v)$ at $-\infty$, the Wronskian argument in the proof of Theorem \ref{thm:image} identifies $f_+^{\varepsilon}(\cdot,0)=y_\varepsilon$, so $q_\varepsilon$ is exceptional with
\[
\frac1{c(q_\varepsilon)}=\lim_{x\to-\infty}y_\varepsilon(x)=1-\varepsilon\Lambda_2(v),
\]
which gives $c(q_\varepsilon)=1+\varepsilon\Lambda_2(v)+O(\varepsilon^{2})$, and Lemma \ref{lem:kexpansion} gives $R_{q_\varepsilon}(0)=\varepsilon\Lambda_2(v)+O(\varepsilon^{2})$.

We prove (3). The identity
\[
\bigl(e^{-u}\delta u'\bigr)'=2e^{-u}L_{u,k}\delta u
\]
shows that $\operatorname{Ran}L_{u,k}\subseteq\ker\Lambda_1$. Choose $\eta,\sigma\in C_c^\infty(\R)$ with $\int\eta\neq0$ and $\Lambda_1(\sigma)=1$, and set
\[
h_n=\eta(\cdot-n)-\Lambda_1\bigl(\eta(\cdot-n)\bigr)\sigma.
\]
Then $h_n\in\Sw\cap\ker\Lambda_1$ and the norms $\lVert h_n\rVert_{W^{k,1}}$ are bounded. Since
\[
\psi(x)\widetilde\psi(x)=x+O(1),
\qquad
\psi(x)^2=1+o(1)
\quad (x\to+\infty),
\]
one has
\[
\Lambda_2(h_n)=n\int_\R\eta\dif x+O(1).
\]
Thus $\Lambda_2$ is not continuous in the $W^{k,1}$-norm.

Let $v\in W^{k,1}(\R)$ satisfy $\Lambda_1(v)=0$. Choose $g_j\in C_c^\infty(\R)$ with $g_j\to v$ in $W^{k,1}$ and put
\[
\widetilde g_j=g_j-\Lambda_1(g_j)\sigma.
\]
Then $\widetilde g_j\to v$ and $\Lambda_1(\widetilde g_j)=0$. Choose $n_j$ so large that
\[
\left|\frac{\Lambda_2(\widetilde g_j)}{\Lambda_2(h_{n_j})}\right|
\lVert h_{n_j}\rVert_{W^{k,1}}<\frac1j,
\]
and define
\[
v_j=\widetilde g_j-
\frac{\Lambda_2(\widetilde g_j)}{\Lambda_2(h_{n_j})}h_{n_j}.
\]
Then $v_j\in\Sw$, $\Lambda_1(v_j)=\Lambda_2(v_j)=0$, and $v_j\to v$ in $W^{k,1}$. Part (1) gives $v_j\in\operatorname{Ran}L_{u,k}$. Therefore
\[
\ker\Lambda_1\subseteq\overline{\operatorname{Ran}L_{u,k}}.
\]
The reverse inclusion was already proved.

Finally choose $v_0\in\Sw\cap\ker\Lambda_1$ with $\Lambda_2(v_0)\neq0$. The preceding approximation puts $v_0$ in the closure of the range. If $L_{u,k}\delta u=v_0$ for some $\delta u\in W^{k+2,1}$, then $\delta u$ and $\delta u'$ tend to zero as $x\to\pm\infty$. Integrating as in part (1) gives
\[
0=\int_\R\delta u'\dif x=-2\Lambda_2(v_0),
\]
a contradiction. The range is therefore not closed. A continuously complemented subspace of a Banach space is closed, so the range is not split. If the inverse on the range were bounded, the range would be complete in the inherited norm and hence closed. This proves the last assertions.
\end{proof}

\begin{remark}
The two conditions in \eqref{eq:twoconditions} arise from the two integrations in \eqref{eq:inverse_dbers}. If $\Lambda_1(v)\neq0$, then $e^{-u}\delta u'$ has a nonzero limit and $\delta u'\notin L^1$. If $\Lambda_1(v)=0$ but $\Lambda_2(v)\neq0$, then $\delta u'$ is Schwartz while $\delta u$ has a nonzero limit at $+\infty$. Both obstructions are intrinsic.
\end{remark}

\end{document}